\newcounter{outlineonecounter}
\newenvironment{outlineone} {\begin{list} {\arabic{outlineonecounter}.} 
  {\usecounter{outlineonecounter}
  \setlength{\labelwidth}{2em} \setlength{\leftmargin}{2em} 
  }}{\end{list}}
\newcounter{outlineacounter}
\newenvironment{outlinea} {\begin{list} {\alph{outlineacounter}.}  
  {\usecounter{outlineacounter}
  }}{\end{list}}
\newcounter{outlineicounter}
\newtheorem{lem}{Lemma}[section]
\newtheorem{cor}[lem]{Corollary}
\newtheorem{prop}[lem]{Proposition}
\newtheorem{thm}[lem]{Theorem}
  \newtheorem{exama}[section]{Example}
\numberwithin{equation}{section}
\numberwithin{figure}{section}
\numberwithin{table}{section}
\renewcommand\phi{\varphi}                 
\renewcommand\epsilon{\varepsilon}
\newcommand \inv{^{-1}}
\newcommand \transpose{^{\text{\rm T}}}
\newcommand \Diag{\operatorname{Diag}}
\newcommand\Eta{{\mathrm H}}
\newcommand\bb{{\mathbf b}}
\newcommand\bj{{\mathbf j}}
\newcommand\bk{{\mathbf k}}
\newcommand\ba{{\mathbf a}}
\newcommand\bn{{\mathbf n}}
\newcommand\bo{{\mathbf o}}
\newcommand\bp{{\mathbf p}}
\newcommand\cB{\mathcal{B}}
\newcommand\cC{\mathcal{C}}
\newcommand\bbR{\mathbb{R}}
\begin{document}

\begin{center}
{\Large\sc Matrices in the Theory of\\[10pt] Signed Simple Graphs}\\[15pt]
\emph{Thomas Zaslavsky\\
Department of Mathematical Sciences\\
Binghamton University (SUNY)\\
Binghamton, NY 13902-6000, U.S.A.}\\[15pt]
\today\\[15pt]
\end{center}

\small
\emph{Abstract.}  I discuss the work of many authors on various matrices used to study signed graphs, concentrating on adjacency and incidence matrices and the closely related topics of Kirchhoff (`Laplacian') matrices, line graphs, and very strong regularity.
\normalsize
\\[10pt]

\section*{Introduction}

This article is a survey of the uses of matrices in the theory of simple graphs with signed edges.  A great many authors have contributed ideas and results to this field; but amongst them all I have felt to be exceptionally inspiring and important the relevant works of J.J.~Seidel and G.R.~Vijayakumar.

A \emph{signed simple graph} is a graph, without loops or parallel edges, in which every edge has been declared positive or negative.  Such a signed graph is illustrated in Figure \ref{F:sgmulti}(a).  For many purposes the most significant thing about a signed graph is not the actual edge signs, but the sign of each circle (or `cycle' or `circuit'), which is the product of the signs of its edges.  This fact is manifested in simple operations on the matrices I will present.

I treat three kinds of matrix of a signed graph, all of them direct generalisations of familiar matrices from ordinary, unsigned graph theory.  

The first is the adjacency matrix.  The adjacency matrix of an ordinary graph has $1$ for adjacent vertices; that of a signed graph has $+1$ or $-1$, depending on the sign of the connecting edge.  The adjacency matrix leads to questions about eigenvalues and strong regularity.

The second matrix is the vertex-edge incidence matrix.  There are two kinds of incidence matrix of an unsigned graph.  The unoriented incidence matrix has two $1$'s in each column, corresponding to the endpoints of the edge whose column it is.  The oriented incidence matrix has a $+1$ and a $-1$ in each column.  For a signed graph, there are both kinds of column, the former corresponding to a negative edge and the latter to a positive edge.  

Finally, there is the Kirchhoff matrix.\footnote{The Kirchhoff matrix is sometimes called the  `Laplacian', but other matrices are also called `Laplacian'.  I adopt the unambiguous name.}  This is the adjacency matrix with signs reversed, and with the degrees of the vertices inserted in the diagonal.  The Kirchhoff matrix equals the incidence matrix times its transpose.   If we multiply in the other order, the transpose times the incidence matrix, we get the adjacency matrix of the line graph, but with $2$'s in the diagonal.  

All this generalises ordinary graph theory.  Indeed, much of graph theory generalises to signed graphs, while much---though certainly not all---signed graph theory consists of generalising facts about unsigned graphs.

As this article is expository I will give only elementary proofs, to illustrate the ideas.

\section{Fundamentals of Signed Graphs}

\subsection{Definitions about Vectors and Matrices}\label{vdef}

A vector of all $1$'s is denoted by $\bj$.  The matrix $J$ consists of all $1$'s.

\subsection{Definitions about Graphs}\label{gdef}

A graph is $\Gamma = (V,E)$ with vertex set $V$ and edge set $E$.  All graphs will be undirected, finite, and \emph{simple}---without loops or multiple edges---except where explicitly stated otherwise (though most of what I say works well when loops and multiple edges are allowed).  
The \emph{order} of the graph is $n := |V|$.  $c(\Gamma)$ is the number of connected components of $\Gamma$.  $\Gamma^c$ is the complement of $\Gamma$.

An edge with endpoints $v,w$ may be written $vw$ or $e_{vw}$.  An edge with endpoints $v_i,v_j$ may also be written $e_{ij}$.

The \emph{degree} $d_\Gamma(v)$ of a vertex $v$ in a graph $\Gamma$ is the number of edges incident with $v$.  If every vertex has the same degree $k$, we say $\Gamma$ is \emph{regular} of degree $k$, or briefly, \emph{$k$-regular}.

A \emph{walk} in $\Gamma$ is a sequence $W = e_{01}e_{12}\cdots e_{l-1,l}$ of edges, where the second endpoint $v_i$ of $e_{i-1,i}$ is the first endpoint of $e_{i,i+1}$.  Its \emph{length} is $l$.  Vertices and edges may be repeated.  
A \emph{path} is a walk with no repeated vertices or edges.  A \emph{closed path} is a walk of positive length in which $v_0 = v_l$, but having no other repeated vertices or edges.

\subsubsection*{Important Subgraphs.}  
A subgraph of $\Gamma$ is \emph{spanning} if it contains all the vertices of $\Gamma$.  A \emph{circle} (or circuit, cycle, polygon) is the graph of a closed path; that is, it is a $2$-regular connected subgraph.  A \emph{theta graph} consists of three internally disjoint paths joining two vertices.  A \emph{pseudoforest} is a graph in which every component is a tree or a 1-tree (a tree with one extra edge forming a circle).  A \emph{block} of $\Gamma$ is a maximal 2-connected subgraph, or an isthmus or an isolated vertex.  A \emph{cutset} is the set of edges between a vertex subset and its complement, except that the empty edge set is not considered a cutset.

\subsubsection*{Adjacency Matrix.}
The \emph{adjacency matrix} of $\Gamma$ is the $n \times n$ matrix $A(\Gamma)$ in which $a_{ij} = 1$ if $v_iv_j$ is an edge and $0$ if not.  The \emph{Seidel adjacency matrix} of $\Gamma$ is the $n \times n$ matrix $S(\Gamma)$ in which $s_{ij} = 0$ if $i=j$ and otherwise is $-1$ if $v_iv_j$ is an edge, $+1$ if it is not.  Seidel introduced this matrix in \cite{EPS}; his very successful use of it in many papers led to his name's becoming firmly attached.  I will have much to say about the Seidel matrix in terms of signed graphs.

\subsection{Definitions about Signed Graphs}\label{sgdef} 

A \emph{signed graph} $\Sigma$ is a pair $(|\Sigma|,\sigma)$ where $|\Sigma| = (V,E)$ is a graph, called the \emph{underlying graph}, and $\sigma: E \to \{+1,-1\}$ is the \emph{sign function} or \emph{signature}.  The sign group $\{+1,-1\}$ can also be written $\{+,-\}$; I shall treat the two notations as equivalent.\footnote{The important thing is that the signs form a multiplicative group of two elements that acts on numbers.  For matrix theory the notation is simpler if they are themselves numbers.}  
Often, we write $\Sigma = (\Gamma,\sigma)$ to mean that the underlying graph is $\Gamma$.  $E^+$ and $E^-$ are the sets of positive and negative edges.  The \emph{positive} and \emph{negative subgraphs},  
$$
\Sigma^+ = (V, E^+) \text{ and } \Sigma^- = (V, E^-) ,  
$$ 
are unsigned graphs.

A signed graph is \emph{simply signed} if it has no loops\footnote{Usually, negative loops would be allowed; but that is not so suitable to the matrix theory.} and no parallel edges with the same sign; but it may have two edges, one positive and one negative, joining a pair of vertices.  We shall be concerned mostly with signed simple graphs (those with no parallel edges) but occasionally simply signed graphs, and even loops, will play a role.\\

\begin{figure}[hb]
\includegraphics[scale=0.8]{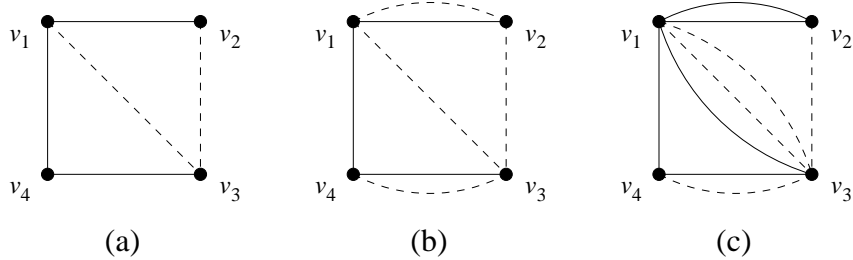}
\caption{A signed simple graph $\Sigma_4$ (a), a simply signed multigraph (b), and a signed multigraph that is not simply signed (c).}
\label{F:sgmulti}
\end{figure}

Signed graphs $\Sigma$ and $\Sigma'$ are \emph{isomorphic} if there is a graph isomorphism $f: |\Sigma| \to |\Sigma'|$ that preserves edge signs.

Define $\Sigma$ to be \emph{regular} if both $\Sigma^+$ and $\Sigma^-$ are regular graphs.

\subsection{Examples}

\begin{outlineone}

\item We say $\Sigma$ is \emph{homogeneous} if all edges have the same sign, and \emph{heterogeneous} otherwise.  (I got this handy terminology from M.~Acharya and co-authors.)  It is \emph{all positive} or \emph{all negative} if all edges are positive or negative, respectively.  $+\Gamma$ denotes $\Gamma$ with all positive signs.  $-\Gamma$ is $\Gamma$ with all negative signs.

\smallskip
\item $K_{\Gamma_0}$ denotes a complete graph $K_n$ with vertex set $V = V({\Gamma_0})$, whose edges are negative if they belong to ${\Gamma_0}$ and positive otherwise.  That is, $(K_{\Gamma_0})^- = {\Gamma_0}$ and $(K_{\Gamma_0})^+ = {\Gamma_0}^c$.
 
\end{outlineone}

\subsection{Walks, Circles, and their Signs}\label{sgw}

The sign of a walk $W = e_1e_2\cdots e_l$ is the product of its edge signs: $$\sigma(W) := \sigma(e_1)\sigma(e_2)\cdots\sigma(e_l).$$  Thus, a walk is either positive or negative, depending on whether it has an even or odd number of negative edges, counted with their multiplicity in $W$ if there are repeated edges.

A circle, being the graph of a closed path, has a definite sign, either positive or negative.  
The class of positive, or negative, circles in $\Sigma$ is denoted by $\cC^+(\Sigma)$, or $\cC^-(\Sigma)$, respectively.  
One can characterise the class of negative circles by the following property.  A \emph{theta graph} is the union of three paths joining the same two vertices, but otherwise disjoint.

\begin{prop}[cf.\ \cite{CSG}]\label{P:circles}
A class $\cB$ of circles in a graph $\Gamma$ is the class of negative circles of a signed graph $(\Gamma,\sigma)$ if and only if every theta subgraph contains an even number of circles in $\cB$.
\end{prop}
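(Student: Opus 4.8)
\section*{Proof proposal}

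The plan is to treat the two implications separately, necessity being a one-line parity observation and sufficiency the substantive part. For necessity, suppose $\cB=\cC^-(\Sigma)$ for some signature $\sigma$. A theta subgraph is the union of three internally disjoint paths $P_1,P_2,P_3$ joining two vertices, and its three circles are the $P_i\cup P_j$ for $1\le i<j\le 3$. Writing $\sigma(P)$ for the product of edge signs along a path $P$, the sign of $P_i\cup P_j$ is $\sigma(P_i)\sigma(P_j)$, so the product of the three circle signs is $\sigma(P_1)^2\sigma(P_2)^2\sigma(P_3)^2=+1$. Hence an even number of the three circles are negative, i.e.\ lie in $\cB$, which is exactly the stated condition.

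For sufficiency I would work component by component, so assume $\Gamma$ is connected, and fix a spanning tree $T$. Each non-tree edge $e$ determines a unique fundamental circle $C_e\subseteq T+e$, and these form an $\bbF_2$-basis of the cycle space, every circle $C$ being the symmetric difference of the $C_e$ over the non-tree edges it contains. I would then define $\sigma$ by setting $\sigma(e)=+1$ on $T$ and, for each non-tree edge $e$, $\sigma(e)=-1$ exactly when $C_e\in\cB$; since $e$ is the only non-tree edge of $C_e$, this makes $\sigma(C_e)=-1$ precisely when $C_e\in\cB$. The goal is to show $C\in\cB\iff\sigma(C)=-1$ for \emph{every} circle $C$. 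Encode membership by $b(C)\in\bbF_2$ and the sign by $s(C)\in\bbF_2$ via $\sigma(C)=(-1)^{s(C)}$; because shared edges cancel in pairs, $s(C\triangle C')=s(C)+s(C')$, so $s$ is additive over symmetric differences, and $s(C)=\sum_e s(C_e)$ over the non-tree edges of $C$. Thus it would suffice to prove that $b$ enjoys the same additivity, for then $b(C)=\sum_e b(C_e)=\sum_e s(C_e)=s(C)$.

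To establish this additivity I would induct on the number $k$ of non-tree edges of $C$. The base case $k=1$ is the definition of $\sigma$ (and no circle has $k=0$, as $T$ is acyclic). For $k\ge 2$, the idea is to choose two vertices $u,v$ on $C$ splitting it into two arcs $A,B$ each carrying at least one non-tree edge, and to adjoin the tree path $T_{uv}$ to form a theta with sides $A,B,T_{uv}$. Its three circles are $C=A\cup B$, together with $A\cup T_{uv}$ and $B\cup T_{uv}$, the latter two having strictly fewer non-tree edges than $C$ since $T_{uv}$ contributes none. The hypothesis applied to this theta gives $b(C)=b(A\cup T_{uv})+b(B\cup T_{uv})$ in $\bbF_2$, the cancellation identity gives the same relation for $s$, and the induction hypothesis on the two smaller circles closes the step.

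The hard part, and the only delicate point, is realizing an \emph{honest} theta at the inductive step: the tree path $T_{uv}$ must be internally disjoint from $C$, and I must secure a choice of $u,v$ for which this holds while both arcs retain a non-tree edge. When $T_{uv}$ meets $C$ at interior vertices no single theta results, and one must instead pass to a first point of intersection, or choose $u,v$ minimizing $|V(T_{uv})\cap V(C)|$, so as to extract a genuine theta from a sub-configuration and recurse. Managing this intersection bookkeeping, rather than any algebra, is where the real work lies; everything else reduces to the parity identity and the multiplicativity of the sign under symmetric difference.
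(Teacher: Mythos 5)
Your necessity argument is complete and correct, and your sufficiency skeleton---switch-free signing off a spanning tree $T$, fundamental circles, parity functions $b$ and $s$ additive over symmetric difference, induction on the number $k$ of non-tree edges---is essentially the standard route (the paper itself gives no proof, deferring to \cite{CSG}, so there is nothing internal to compare against). But the inductive step as you wrote it does not go through, and you say so yourself: you first choose $u,v$ so that both arcs $A,B$ of $C$ carry a non-tree edge, and then need $T_{uv}$ to be internally disjoint from $C$. In general it is not; worse, $T_{uv}$ may run along edges of $C$ itself, so that no theta subgraph arises at all. Your fallback (``pass to a first point of intersection, or choose $u,v$ minimizing $|V(T_{uv})\cap V(C)|$'') is not carried out, and it is not routine bookkeeping in the form you set up: after truncating the tree path at its first meeting with $C$, the new attachment points need not separate your chosen non-tree edges, so the condition you insisted on in advance---each arc retaining a non-tree edge---is exactly what you lose control of. Since this is the crux and you explicitly leave it open, the proof is incomplete.

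The fix is to reverse the order of quantifiers: do not choose the arcs first. If $k\geq 2$, deleting the non-tree edges from $C$ leaves at least two vertex-disjoint (possibly trivial) arcs of $C\cap T$; pick $a,b$ in different arcs and walk along $T_{ab}$ from $a$. Since tree edges of $C$ incident to a vertex of an arc stay in that arc, there is a first edge of $T_{ab}$ not in $E(C)$; let $a'$ be its initial vertex, and let $w$ be the first vertex of $C$ encountered after $a'$. The segment $P$ of $T_{ab}$ from $a'$ to $w$ then consists of tree edges, is internally disjoint from $C$, and shares no edge with $C$, so $C\cup P$ is an honest theta with branch vertices $a'\neq w$. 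Now the worry about the arcs evaporates automatically: each of the two circles $(\text{arc})\cup P$ contains at least one non-tree edge of $C$, because $P\subseteq T$ and no circle can lie inside the acyclic $T$; hence both circles have at least one and at most $k-1$ non-tree edges, and your theta parity $b(C)=b(A'\cup P)+b(B'\cup P)$ in $\bbF_2$, together with the induction hypothesis and the matching identity for $s$, closes the step exactly as you intended. With this one lemma supplied, the rest of your argument is correct as written.
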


A negative circle of length $2$---which consists of one positive edge and one negative edge and can only occur in a signed multigraph---is called a \emph{negative digon}.

\subsection{Balance}

A signed graph $\Sigma$, or a subgraph or edge set, is called \emph{balanced} if every circle in it is positive.  
$b(\Sigma)$ is the number of connected components of $\Sigma$ that are balanced.  For $S \subseteq E$, $b(S)$ is the number of balanced components of $(V,S)$.  

A circle is balanced iff it is positive.  A walk is called balanced when its underlying graph is balanced; thus, a positive walk may be balanced or unbalanced, and the same holds for a negative walk.

It is easy to see that $\Sigma$ is balanced if and only if every block is balanced.

\begin{thm}[Harary's Balance Theorem \cite{NB}]\label{T:balance}
A necessary and sufficient condition for $\Sigma$ to be balanced is that there be a bipartition of\/ $V$ into $X$ and $Y$ such that an edge is negative precisely when it has one endpoint in $X$ and one in $Y$.  ($X$ or $Y$ may be empty.)
\end{thm}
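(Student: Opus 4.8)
The plan is to treat the two implications separately. Sufficiency of the condition is routine, so I would dispatch it first and reserve the real work for necessity, where I would construct the bipartition explicitly from a spanning tree.

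For \emph{sufficiency}, suppose $V = X \cup Y$ is a bipartition whose crossing edges are exactly the negative ones. Encode it by a labelling $\eta : V \to \{+1,-1\}$ with $\eta \equiv +1$ on $X$ and $\eta \equiv -1$ on $Y$; the hypothesis says precisely that $\sigma(e) = \eta(v)\eta(w)$ for each edge $e = vw$. For any circle $C = v_0 v_1 \cdots v_l$ with $v_l = v_0$, the sign then telescopes: $\sigma(C) = \prod_{i=1}^{l} \eta(v_{i-1})\eta(v_i)$, in which each factor $\eta(v_i)$ occurs exactly twice, so $\sigma(C) = +1$. Hence every circle is positive and $\Sigma$ is balanced. (Intuitively, in traversing $C$ one crosses between $X$ and $Y$ exactly at the negative edges, and an even number of crossings is needed to return to the starting side.)

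For \emph{necessity}, assume $\Sigma$ is balanced. As no edge joins distinct components, it suffices to build the bipartition on each connected component, so I may assume $\Sigma$ is connected. I would fix a spanning tree $T$ with root $r$, let $\pi_v$ be the unique $r$--$v$ path in $T$, set $\eta(v) := \sigma(\pi_v)$ (so $\eta(r) = +1$), and put $X := \eta\inv(+1)$ and $Y := \eta\inv(-1)$. The whole theorem then reduces to the claim that $\sigma(e) = \eta(v)\eta(w)$ for every edge $e = vw$, since this says exactly that $e$ is negative iff $\eta(v) \ne \eta(w)$, i.e.\ iff $e$ crosses between $X$ and $Y$.

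To verify the claim, I would split on whether $e \in T$. If $e \in T$, then $\pi_w$ and $\pi_v$ differ by the single edge $e$, whence $\sigma(\pi_w) = \sigma(\pi_v)\sigma(e)$ and so $\sigma(e) = \eta(v)\eta(w)$. If $e \notin T$, then $e$ together with the unique $v$--$w$ path $P_{vw}$ in $T$ forms the fundamental circle $C_e$, so $\sigma(C_e) = \sigma(e)\,\sigma(P_{vw})$; balance forces $\sigma(C_e) = +1$, giving $\sigma(e) = \sigma(P_{vw})$. Since $\pi_v$ and $\pi_w$ share their initial segment down to their last common vertex, the shared edges are traversed twice in the product $\sigma(\pi_v)\sigma(\pi_w)$ and cancel, so $\sigma(\pi_v)\sigma(\pi_w)$ equals the sign of the complementary $v$--$w$ route through $T$, namely $\sigma(P_{vw})$. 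Therefore $\sigma(e) = \sigma(P_{vw}) = \eta(v)\eta(w)$ in this case as well. The main obstacle --- and the only step where balance is genuinely used --- is precisely this non-tree-edge case: deriving $\sigma(e) = \sigma(P_{vw})$ from $\sigma(C_e) = +1$ and then matching $\sigma(P_{vw})$ with $\eta(v)\eta(w)$; everything else is bookkeeping.
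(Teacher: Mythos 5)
Your proof is correct and takes essentially the same route as the paper: the paper obtains the bipartition by switching $\Sigma$ to all-positive via the Switching Lemma, whose proof uses exactly your labelling ($\theta(w) := \sigma(T_{vw})$ for a rooted spanning tree $T$), so your potential function $\eta$ is that switching function and your $X$, $Y$ are just the unswitched and switched vertex sets. Your explicit case analysis on tree versus non-tree edges merely inlines the verification the paper leaves inside the Switching Lemma, in line with the paper's own remark that balance is equivalent to $\sigma$ having a potential function $\mu$ with $\sigma(e_{vw}) = \mu(v)\mu(w)$.
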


In other words, $\Sigma$ is balanced just when $E^-$ is empty or a cutset.  When $\Sigma$ is balanced, a \emph{Harary bipartition} is any bipartition $\{X,Y\}$ as in the theorem.  It is unique if and only if $\Sigma$ is connected.  It was not so easy to prove this theorem at the time, but with switching it becomes simple; see later.

\begin{cor}\label{C:bipartite}
$-\Gamma$ is balanced iff\/ $\Gamma$ is bipartite.
\end{cor}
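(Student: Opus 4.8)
The plan is to derive this directly from Harary's Balance Theorem (Theorem~\ref{T:balance}), applied to the signed graph $\Sigma = -\Gamma$. The entire content of the corollary is a translation of Harary's partition condition in the special case in which every edge is negative, so I expect the work to be bookkeeping rather than a new idea.

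First I would record that in $-\Gamma$ we have $E^- = E$ and $E^+ = \eset$. Harary's theorem then asserts that $-\Gamma$ is balanced if and only if there is a bipartition of $V$ into $X$ and $Y$ for which an edge is negative \emph{exactly} when it joins $X$ to $Y$. Since here every edge is negative, this condition collapses to the requirement that every edge of $\Gamma$ join $X$ to $Y$---equivalently, that no edge lie within $X$ or within $Y$. That is precisely the statement that $\{X,Y\}$ exhibits $\Gamma$ as bipartite. To close the biconditional I would run this in both directions: if $-\Gamma$ is balanced, the Harary bipartition $\{X,Y\}$ is a bipartition of $\Gamma$; conversely, any bipartition witnessing bipartiteness of $\Gamma$ forces each (negative) edge to cross between the parts, so Harary's condition is met and $-\Gamma$ is balanced.

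There is also a self-contained route that avoids invoking Harary: in $-\Gamma$ a circle of length $l$ has sign $(-1)^l$, so it is positive exactly when $l$ is even; hence $-\Gamma$ is balanced iff every circle of $\Gamma$ has even length, which is the classical characterization of bipartiteness. I would regard neither argument as presenting a genuine obstacle. The only point needing care is the logical handling of the degenerate cases---one part empty, or $\Gamma$ edgeless---and these are already accommodated by the parenthetical ``$X$ or $Y$ may be empty'' built into Theorem~\ref{T:balance}.
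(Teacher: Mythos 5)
Your proposal is correct and matches the paper's intent exactly: the paper states this corollary immediately after Harary's Balance Theorem with no separate proof, precisely because the specialization $E^- = E$ collapses Harary's bipartition condition to bipartiteness of $\Gamma$, which is the bookkeeping you carry out. Your alternative circle-parity argument (a circle of length $l$ in $-\Gamma$ has sign $(-1)^l$) is also sound and is the standard self-contained route, but it is not needed here.
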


This means balance is a kind of generalisation of biparticity.  That turns out to be a valid insight in a number of ways; e.g., in regard to the unoriented incidence matrix of a graph (Example \ref{ix}.\ref{X:unoriented}).

We call $\Sigma$ \emph{antibalanced} if $-\Sigma$ is balanced; equivalently, if all even circles are positive and all odd circles are negative.  The quick way to get an antibalanced signed graph is to give negative signs to all edges of a graph.  Corresponding to Harary's Balance Theorem is the following result:

\begin{cor}\label{C:antibalance}
A necessary and sufficient condition for $\Sigma$ to be antibalanced is that there be a bipartition of\/ $V$ into $X$ and $Y$ such that an edge is positive precisely when it has one endpoint in $X$ and one in $Y$.  ($X$ or $Y$ may be empty.)
\end{cor}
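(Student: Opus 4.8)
The plan is to derive this corollary directly from Harary's Balance Theorem (Theorem~\ref{T:balance}) by applying it to the negated signed graph $-\Sigma$. By the definition given just before the statement, $\Sigma$ is antibalanced precisely when $-\Sigma$ is balanced, so the entire task reduces to translating the balance criterion for $-\Sigma$ back into a statement about the signs of $\Sigma$.

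First I would note that $-\Sigma$ has the same underlying graph as $\Sigma$, since negation alters only the signature and not the vertex set, the edge set, or the incidence relation. Its signature sends each edge $e$ to $-\sigma(e)$, so an edge is negative in $-\Sigma$ exactly when it is positive in $\Sigma$, and positive in $-\Sigma$ exactly when it is negative in $\Sigma$. This is the only substantive bookkeeping in the argument, and it is where I would be most careful.

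Next I would apply Theorem~\ref{T:balance} verbatim to $-\Sigma$: it is balanced if and only if there is a bipartition of $V$ into $X$ and $Y$ such that an edge is negative in $-\Sigma$ precisely when it has one endpoint in $X$ and one in $Y$. Substituting the sign translation from the previous paragraph, the phrase ``negative in $-\Sigma$'' becomes ``positive in $\Sigma$,'' and the bipartition condition transfers unchanged because it concerns only the underlying graph, which negation leaves fixed. Combining this with the equivalence ``$\Sigma$ antibalanced $\iff$ $-\Sigma$ balanced'' yields exactly the stated condition; the parenthetical ``($X$ or $Y$ may be empty)'' carries over directly, as it already appears in Theorem~\ref{T:balance}.

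I do not expect any genuine obstacle here: the corollary is a formal consequence of Harary's theorem under the involution $\Sigma \mapsto -\Sigma$, and the only thing that could go wrong is mishandling which of the two sign classes gets interchanged. Provided one keeps straight that negation swaps $E^+$ and $E^-$ while preserving $|\Sigma|$, the proof is a one-line application of the previously established theorem.
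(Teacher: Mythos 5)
Your proof is correct and is precisely the derivation the paper intends: the corollary is stated immediately after the definition ``$\Sigma$ is antibalanced if $-\Sigma$ is balanced,'' with the proof left implicit, and that implicit proof is exactly your application of Theorem~\ref{T:balance} to $-\Sigma$ with the observation that negation swaps $E^+$ and $E^-$ while fixing the underlying graph. Your careful handling of which sign class is interchanged is the only nontrivial point, and you got it right.
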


Thus, $\Sigma$ is antibalanced iff $E^+$ is empty or a cutset.  Correspondingly to Corollary \ref{C:bipartite}, $+\Gamma$ is antibalanced iff $\Gamma$ is bipartite.

\subsection{Switching}\label{sw}

\emph{Switching} $\Sigma$ means reversing the signs of all edges between a vertex set $X$ and its complement.  $X$ may be empty.  We say $X$ is \emph{switched} in $\Sigma$.  The switched graph is written $\Sigma^X$.   \emph{Vertex switching} means switching a single vertex.  Switching a set $X$ has the same effect as switching all the vertices in $X$, one after another.

Another version of switching, which is equivalent to the preceding and is very useful, is in terms of a function $\theta: V \to \{+,-\}$, called a \emph{switching function}.  \emph{Switching $\Sigma$ by $\theta$} means changing $\sigma$ to $\sigma^\theta$ defined by $$\sigma^\theta(vw) := \theta(v) \sigma(vw) \theta(w).$$  The switched graph is written $\Sigma^\theta := (|\Sigma|, \sigma^\theta).$ 

If $\Sigma$ can be switched to become $\Sigma'$, we say $\Sigma$ and $\Sigma'$ are \emph{switching equivalent}.  Switching equivalence is an equivalence relation on signatures of a fixed graph.  An equivalence class is called a \emph{switching class}.  $[\Sigma]$ denotes the switching class of $\Sigma$.

If $\Sigma'$ is isomorphic to a switching of $\Sigma$, we say $\Sigma$ and $\Sigma'$ are \emph{switching isomorphic}.  Switching isomorphism is an equivalence relation on all signed graphs.  Often in the literature switching isomorphism is not distinguished from switching equivalence, but I prefer to separate the two concepts.

\begin{lem}[Switching Lemma {\cite[Corollary 3.3]{SG}}]\label{L:switching}
$\Sigma$ is balanced if and only if it switches to an all-positive signature, and it is antibalanced if and only if it switches to an all-negative signature.
\end{lem}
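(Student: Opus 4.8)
The plan is to prove the balanced case directly and then derive the antibalanced case by negation, using Corollary \ref{C:antibalance} and the fact that negating all signs commutes with switching. I will verify the key invariance separately and then argue both directions of the equivalence.

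First I would establish the fundamental observation that switching preserves the sign of every circle. Given a switching function $\theta: V \to \{+,-\}$ and a circle $C = v_0 v_1 \cdots v_{l-1} v_0$, the switched sign is $\sigma^\theta(C) = \prod_i \theta(v_i)\sigma(v_iv_{i+1})\theta(v_{i+1})$, and since each vertex appears in exactly two edges of the circle, every factor $\theta(v_i)$ occurs exactly twice, so the $\theta$-contributions cancel and $\sigma^\theta(C) = \sigma(C)$. Consequently balance is a switching invariant: $\Sigma$ is balanced if and only if $\Sigma^\theta$ is balanced. Since an all-positive signature is obviously balanced (every circle has sign $+1$), any signature switching-equivalent to it must also be balanced, which gives the easy direction.

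For the converse direction, suppose $\Sigma$ is balanced; I must produce a switching function $\theta$ making all edges positive. By Harary's Balance Theorem (Theorem \ref{T:balance}) there is a bipartition $\{X,Y\}$ of $V$ with negative edges precisely those crossing between $X$ and $Y$. I would define $\theta(v) = +1$ for $v \in X$ and $\theta(v) = -1$ for $v \in Y$. Then for an edge $vw$: if both endpoints lie in the same part, $\theta(v)\theta(w) = +1$ and the edge is positive, so $\sigma^\theta(vw) = \sigma(vw) = +1$; if the endpoints lie in different parts, $\theta(v)\theta(w) = -1$ and the edge is negative, so $\sigma^\theta(vw) = (-1)\sigma(vw) = (-1)(-1) = +1$. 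Hence $\sigma^\theta$ is all-positive, completing the balanced case.

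Finally I would handle the antibalanced case by negation. By definition $\Sigma$ is antibalanced if and only if $-\Sigma$ is balanced, and by the balanced case just proved this holds if and only if $-\Sigma$ switches to an all-positive signature. Since switching by $\theta$ and negating all signs are independent operations that commute---negating turns an all-positive signature into an all-negative one---it follows that $-\Sigma$ switches to all-positive exactly when $\Sigma$ switches to all-negative. I expect the main obstacle to be purely bookkeeping: making sure the sign conventions line up correctly when combining switching with negation, so that the $+1$'s and $-1$'s match up as claimed rather than getting inadvertently flipped.
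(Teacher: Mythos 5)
Your easy direction (switching preserves circle signs) and your reduction of the antibalanced case to the balanced case by negation are both correct, and the latter matches the paper, which says the second statement ``follows by a similar proof, or by negation from the first part.'' The problem is the hard direction. You derive it from Harary's Balance Theorem (Theorem \ref{T:balance}), but in this paper the logical dependency runs the other way: the author explicitly remarks that Harary's theorem ``was not so easy to prove'' directly and that ``with switching it becomes simple,'' and then, immediately after the Switching Lemma, proves Harary's theorem \emph{from} it (``If $\Sigma$ is balanced, switch it to be all positive. Letting $X$ be the set of switched vertices, the bipartition is $\{X, V\setminus X\}$''). So within the paper's development your argument is circular: the bipartition you invoke is exactly what the Switching Lemma is being used to construct. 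Your proof is only valid if you import an independent proof of Harary's theorem (e.g., Harary's original argument), which you neither supply nor cite as needed, and doing so would mean the hard combinatorial content of the lemma has been pushed into an unproved black box rather than proved.

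The fix is the paper's own self-contained construction, which is no harder than your appeal to Harary. Reduce to a connected $\Sigma$, fix a spanning tree $T$ rooted at a vertex $v$, and define the switching function $\theta(w) := \sigma(T_{vw})$, where $T_{vw}$ is the unique $vw$-path in $T$; a short computation shows $\sigma^\theta$ is $+1$ on every tree edge. Now use your own invariance observation: for a non-tree edge $e$, its switched sign equals the sign of the fundamental circle of $e$ with respect to $T$ (since the tree path is now all positive), and balance makes every such circle positive, so no negative edges remain. This yields the Switching Lemma directly, and Harary's theorem then drops out as the corollary the paper intends it to be. Your negation argument for the antibalanced case can stand as written once this replacement is made.
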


\begin{proof}
To prove the first statement one can assume $\Sigma$ is connected.  Take a spanning tree, rooted at any vertex $v$, and switch $\Sigma$ so the tree is all positive.  The switching function for this is $\theta(w) := \sigma(T_{vw})$, where $T_{vw}$ is the unique $vw$-path in $T$.  $\Sigma$ is balanced if and only if there are no remaining negative edges.  

The second statement follows by a similar proof, or by negation from the first part.
\end{proof}

A useful way to think of the Switching Lemma without actually switching is that $\Sigma$ is balanced iff there is a function $\mu: V \to \{+,-\}$ such that $\sigma(e_{vw}) = \mu(v)\mu(w)$.  This amounts to saying that $\sigma$ has a potential function, i.e., $\mu$.

Properties preserved by switching are the signs of circles, and balance or imbalance of $\Sigma$ and of any subgraph; also deletion sets and negation sets.  

The proof technique of the Switching Lemma yields a valuable insight into equivalence of signed graphs.

\begin{thm}[Switching Equivalence {\cite{Soz}, \cite[Proposition 3.2]{SG}}]\label{T:swequiv}
Two signed graphs with the same underlying graph are switching equivalent if and only if they have the same class of positive circles.
\end{thm}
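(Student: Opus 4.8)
The plan is to prove both implications, the forward one directly from the switching-invariance of circle signs and the converse by a product-signature trick that reduces everything to the Switching Lemma (Lemma~\ref{L:switching}). Throughout, write $\Sigma = (\Gamma,\sigma)$ and $\Sigma' = (\Gamma,\sigma')$ for the two signed graphs on the common underlying graph $\Gamma$, and note that since the circles of $\Gamma$ partition into positive and negative ones, ``same class of positive circles'' is the same as ``$\sigma(C) = \sigma'(C)$ for every circle $C$''.

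For the forward direction I would verify that switching preserves every circle sign, which is already listed among the invariants of switching. Concretely, if $\Sigma$ switches to $\Sigma'$ by a switching function $\theta$, then for a circle $C = e_1\cdots e_k$ traversed through vertices $v_0,v_1,\dots,v_k = v_0$ one has $\sigma'(C) = \prod_{i=1}^k \theta(v_{i-1})\sigma(e_i)\theta(v_i)$; each vertex factor $\theta(v_i)$ occurs exactly twice and hence squares to $+1$, leaving $\sigma'(C) = \sigma(C)$. So switching-equivalent signatures assign identical signs to all circles, giving one implication for free.

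For the converse, suppose $\sigma(C) = \sigma'(C)$ for every circle $C$. The key step is to introduce the product signature $\tau := \sigma\sigma'$ on $\Gamma$, defined by $\tau(e) := \sigma(e)\sigma'(e)$, and to observe that $(\Gamma,\tau)$ is balanced. Indeed, for any circle $C$, $\tau(C) = \prod_{e\in C}\sigma(e)\sigma'(e) = \sigma(C)\sigma'(C) = \sigma(C)^2 = +1$, so every circle of $(\Gamma,\tau)$ is positive. Now I would invoke the potential-function form of the Switching Lemma: balance of $(\Gamma,\tau)$ yields $\mu\colon V \to \{+,-\}$ with $\tau(e_{vw}) = \mu(v)\mu(w)$ for every edge. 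Since each sign is its own inverse, $\sigma'(e_{vw}) = \tau(e_{vw})\sigma(e_{vw}) = \mu(v)\,\sigma(e_{vw})\,\mu(w) = \sigma^\mu(e_{vw})$, so $\Sigma' = \Sigma^\mu$ and the two graphs are switching equivalent.

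The argument has essentially no hard core: the only point requiring care is the product-signature trick itself, namely recognising that comparing two signatures is best done by multiplying them and that the resulting signature is balanced exactly when the two agree on all circles. The remaining (minor) obstacle is bookkeeping when $\Gamma$ is disconnected, but this causes no trouble, since balance and the potential function $\mu$ are defined component by component and the Switching Lemma already incorporates that reduction; one simply chooses $\mu$ independently on each component.
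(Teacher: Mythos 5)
Your proof is correct, but it takes a genuinely different route from the paper's. The paper proves Theorem~\ref{T:swequiv} by normalization: it reduces to connected graphs, switches both signed graphs so that a fixed spanning tree $T$ is all positive, and then argues in the forward direction that two switchings agreeing on $T$ can differ only by switching nothing or every vertex (so the normalized graphs coincide), and in the converse that the sign of each non-tree edge $e$ equals the sign of the unique circle in $T \cup \{e\}$, so equality of circle signs forces the normalized graphs to be identical. You instead argue algebraically: your forward direction is the self-contained telescoping computation showing circle signs are switching invariants, and your converse forms the product signature $\tau = \sigma\sigma'$, notes $\tau(C) = \sigma(C)\sigma'(C) = +1$ for every circle so that $(\Gamma,\tau)$ is balanced, and then invokes the potential-function form of the Switching Lemma (stated in the paper immediately after the proof of Lemma~\ref{L:switching}) to obtain $\mu$ with $\tau(e_{vw}) = \mu(v)\mu(w)$, whence $\sigma' = \sigma^\mu$. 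What each approach buys: yours is shorter and exploits the group structure on signatures---switchings of $\sigma$ are exactly the signatures $\sigma\tau$ with $\tau$ balanced---and it sidesteps the one delicate step in the paper's forward direction (that a switching fixing an all-positive spanning tree of a connected graph is constant), while also handling disconnected graphs with no extra work, as you note. The paper's normalization, by contrast, is constructive and produces a canonical representative of each switching class (spanning tree all positive); as a by-product it shows a switching class is determined by the signs of the fundamental circles of a single spanning tree, which is algorithmically useful and not directly delivered by your product-signature argument.
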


\begin{proof}
As in the previous proof we may assume the graphs, $\Sigma_1$ and $\Sigma_2$, are connected.  Switch both signed graphs so a fixed spanning tree $T$ is all positive.  Call the switched graphs $\Sigma_1'$ and $\Sigma_2'$.  

If $\Sigma_1$ and $\Sigma_2$ are switching equivalent, so are $\Sigma_1'$ and $\Sigma_2'$, but as they agree on a spanning tree, one can only be switched to the other by no switching at all or by switching every vertex.  Thus, $\Sigma_1' = \Sigma_2'$, whence $\cC^+(\Sigma_1) = \cC^+(\Sigma_1') = \cC^+(\Sigma_2') = \cC^+(\Sigma_2)$.

If $\cC^+(\Sigma_1) = \cC^+(\Sigma_2)$, then after switching $\cC^+(\Sigma_1') = \cC^+(\Sigma_2')$.  Now, with an all-positive spanning tree $T$, the only possible difference between $\Sigma_1'$ and $\Sigma_2'$ is in the signs of the non-tree edges.  But the sign of an edge $e \notin T$ is the sign of the unique circle in $T \cup \{e\}$, which is the same in $\Sigma_1'$ and $\Sigma_2'$.  Therefore, $\Sigma_1' = \Sigma_2'$.  As $\Sigma_1$ and $\Sigma_2$ switch to the same signed graph $\Sigma_1'$, they are switching equivalent.
\end{proof}

\begin{cor}[Switching Isomorphism \cite{Soz}]\label{T:swisom}
Two signed graphs are switching isomorphic if and only if there is an isomorphism of underlying graphs that preserves the signs of circles.
\end{cor}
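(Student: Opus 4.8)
The plan is to reduce the statement to Theorem \ref{T:swequiv} by using the isomorphism to move both signed graphs onto a common underlying graph. The essential observation is that switching isomorphism combines two operations---a switching and a sign-preserving graph isomorphism---each of which separately preserves the signs of circles; conversely, a circle-sign-preserving isomorphism lets us align the underlying graphs so that only switching remains to be accounted for, and there Theorem \ref{T:swequiv} applies directly.

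First I would dispose of the forward direction. Assume $\Sigma$ and $\Sigma'$ are switching isomorphic, so that there is a switching function $\theta$ and a sign-preserving isomorphism $f \colon \Sigma^\theta \to \Sigma'$. Since switching leaves the underlying graph unchanged, $f$ is in particular an isomorphism of underlying graphs $|\Sigma| \to |\Sigma'|$. Switching preserves the sign of every circle, so each circle of $\Sigma$ has the same sign in $\Sigma^\theta$; and a sign-preserving isomorphism carries a circle of $\Sigma^\theta$ to a circle of $\Sigma'$ of the same sign. Composing these two facts shows that $f$ preserves the signs of circles, as required.

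For the converse I would transport the signature of $\Sigma$ across the given isomorphism. Suppose $f \colon |\Sigma| \to |\Sigma'|$ is an isomorphism of underlying graphs preserving circle signs. Define a signature $\tau$ on $|\Sigma'|$ by $\tau(f(e)) := \sigma(e)$ for each edge $e$, and set $\Sigma_1 := (|\Sigma'|,\tau)$. By construction $f$ is a sign-preserving isomorphism $\Sigma \to \Sigma_1$, so $f$ matches the circle signs of $\Sigma$ with those of $\Sigma_1$. Because $f$ also matches the circle signs of $\Sigma$ with those of $\Sigma'$, the two signed graphs $\Sigma_1$ and $\Sigma'$ share the underlying graph $|\Sigma'|$ and have exactly the same class of positive circles. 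Theorem \ref{T:swequiv} then supplies a switching function $\eta$ on $V(\Sigma')$ with $\Sigma' = \Sigma_1^{\eta}$. Pulling $\eta$ back through $f$---that is, switching $\Sigma$ by $\eta \circ f$---produces a switching of $\Sigma$ that $f$ carries isomorphically onto $\Sigma_1^{\eta} = \Sigma'$; hence $\Sigma'$ is isomorphic to a switching of $\Sigma$, which is the definition of switching isomorphism.

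The only delicate point---and the step I expect to need the most care---is the bookkeeping in this last move: one must check that switching the pushforward $\Sigma_1$ by $\eta$ agrees with pushing forward the switching $\Sigma^{\eta \circ f}$, i.e.\ that switching commutes with relabeling vertices along $f$. This is immediate from the definition $\sigma^\eta(vw) = \eta(v)\sigma(vw)\eta(w)$ together with the fact that $f$ is a bijection on vertices, but it is precisely where ``isomorphic to a switching'' and ``switching of an isomorphic copy'' must be reconciled. Everything else follows formally from the switching-invariance of circle signs combined with Theorem \ref{T:swequiv}.
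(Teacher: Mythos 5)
Your argument is correct and is precisely the derivation the paper intends: it states the result as a corollary of Theorem \ref{T:swequiv} with no written proof, the implicit argument being exactly your reduction---transport the signature of $\Sigma$ across the isomorphism, apply Theorem \ref{T:swequiv} on the common underlying graph, and pull the switching function back through $f$. Your careful check that switching commutes with relabeling along $f$ fills in the only detail the paper leaves unstated.
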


With switching I can give short proofs of such results as Harary's fundamental theorem.

\begin{proof}[Proof of Harary's Balance Theorem] 
If there is such a bipartition, every circle has an even number of negative edges, so $\Sigma$ is balanced.  

If $\Sigma$ is balanced, switch it to be all positive.  Letting $X$ be the set of switched vertices, the bipartition is $\{X,V\setminus X\}$. 
\end{proof}

\subsection{History}

Signed graphs and balance were invented by Harary by 1953 \cite{NB} to treat a question in social psychology \cite{CH}.  They have since been reinvented over and over in many contexts in physics, geometry, economics, and more---thus showing that they are a natural concept---but it was Harary who first had the idea of labelling with the 2-element group by putting signs on the edges and multiplying them.  
Harary also introduced antibalance, in \cite{SD}.

It is remarkable that, years before Harary, K\"onig \cite[Section X.3]{Konig} had the idea of a graph with a distinguished subset of edges, in a way we now recognise as equivalent to signed graphs, and even proved Harary's Balance Theorem and defined switching in the form of taking the set sum of a cutset with the set of negative edges.  Despite all this, he failed to notice the value of edge labels that one can multiply, which I regard as the crucial step in the invention of signed graphs.

The first to think of switching as an operation on signed graphs were the social psychologists Abelson and Rosenberg \cite{PsL}.  However, their formalism for switching, in terms of a Hadamard product operation on their adjacency matrix $R$ (see Section \ref{aar}), was awkward and ungraphical.  I arrived at switching of signed graphs as an adaptation and generalisation of graph switching, introduced by Seidel and applied with great effect in many papers (cf.\ \cite{EPS, STG}).  Switching, like signed graphs themselves, has been reinvented several times, with varying names and notation.

The observation following the proof of the Switching Lemma was published, independently of other work on switching, by Sampathkumar \cite{PSLS}.

\section{Adjacency Matrices}

\subsection{Definition}\label{adef}

The adjacency matrix $A = A(\Sigma)$ is an $n \times n$ matrix in which $a_{ij} = \sigma(v_iv_j)$ (the sign of the edge $v_iv_j$) if $v_i$ and $v_j$ are adjacent, and $0$ if they are not.  Thus $A$ is a symmetric matrix with entries $0, \pm1$ and zero diagonal, and conversely, any such matrix is the adjacency matrix of a signed simple graph.  The absolute value matrix, $|A(\Sigma)|$, equals $A(|\Sigma|)$, the adjacency matrix of the underlying unsigned graph.

When $\Sigma$ has multiple edges, the $(i,j)$ entry of $A$ is the sum of the signs of all $v_iv_j$ edges.  Positive and negative edges cancel each other.  This will become important to us in the treatment of line graphs.\\

\begin{figure}[ht]
\parbox{11em}{
\begin{center}
$
\begin{pmatrix}
0 & 1 & -1 & 1 \\
1 & 0 & -1 & 0 \\
-1 & -1 & 0 & 1 \\
1 & 0 & 1 & 0
\end{pmatrix}
$
\vspace{.4cm}

(a) $A(\Sigma_4)$
\end{center}
}
\quad
\parbox{11em}{
\begin{center}
$
\begin{pmatrix}
0 & 0 & -1 & 1 \\
0 & 0 & -1 & 0 \\
-1 & -1 & 0 & 0 \\
1 & 0 & 0 & 0
\end{pmatrix}
$
\vspace{.4cm}

(b)
\end{center}
}
\\[10pt]
\parbox{11em}{
\begin{center}
$
\begin{pmatrix}
0 & 2 & -1 & 1 \\
2 & 0 & -1 & 0 \\
-1 & -1 & 0 & 0 \\
1 & 0 & 0 & 0
\end{pmatrix}
$
\vspace{.4cm}

(c)
\end{center}
}
\caption{The adjacency matrices of the signed graphs in Figure \ref{F:sgmulti}.  Note the cancellations due to simultaneous adjacencies with opposite signs in (b, c) and the $2$ for double adjacency in (c).}
\label{F:adjacency}
\end{figure}
%

\subsection{Walks and Neighbors}\label{awalk}

Powers of $A$ count walks in a signed way.  Let $w^+_{ij}(l)$ be the number of positive walks of length $l$ from $v_i$ to $v_j$ (that is, the sign product of the edges in $W$ is positive), and let $w^-_{ij}(l)$ be the number of negative walks.  

\begin{thm}\label{T:walkscount}
The $(i,j)$ entry of $A^l$ is $w^+_{ij}(l) - w^-_{ij}(l)$.
\end{thm}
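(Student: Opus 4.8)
The plan is to prove this by induction on the walk length $l$, mirroring the classical unsigned proof that $(A^l)_{ij}$ counts walks of length $l$, but tracking signs multiplicatively throughout. The base case $l=1$ is immediate: a walk of length $1$ from $v_i$ to $v_j$ is just an edge $v_iv_j$, so $w^+_{ij}(1)-w^-_{ij}(1) = \sigma(v_iv_j) = a_{ij}$ when the vertices are adjacent, and $0$ otherwise, which is exactly the $(i,j)$ entry of $A$. (One should note that the length-$0$ case, if one cares to include it, gives $A^0 = I$, matching the single empty walk from a vertex to itself, which is positive.)

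For the inductive step, assume the claim holds for length $l-1$, and write $A^l = A^{l-1}A$, so that the $(i,j)$ entry is $\sum_k (A^{l-1})_{ik}\, a_{kj}$. The key observation is that every walk of length $l$ from $v_i$ to $v_j$ decomposes uniquely as a walk of length $l-1$ from $v_i$ to some penultimate vertex $v_k$, followed by the final edge $e_{kj} = v_kv_j$; and crucially, the sign of the whole walk is the product of the sign of the initial segment and the sign $\sigma(v_kv_j)$ of the final edge, by the multiplicative definition of $\sigma(W)$ in Section I.F. I would set $f_{ij}(l) := w^+_{ij}(l) - w^-_{ij}(l)$ and argue that $f_{ij}(l) = \sum_k f_{ik}(l-1)\, a_{kj}$, because appending a positive final edge preserves the sign of the prefix walk while appending a negative final edge flips it. By the inductive hypothesis $f_{ik}(l-1) = (A^{l-1})_{ik}$, and $a_{kj}$ is by definition the sign of the edge (or $0$ if absent), so the sum is precisely $(A^{l-1}A)_{ij} = (A^l)_{ij}$.

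The main point requiring care is the bookkeeping that the signed count $w^+ - w^-$ is the \emph{correct} additive invariant, i.e.\ that it is additive over disjoint sets of walks and behaves multiplicatively when a fixed edge is appended. Concretely, when we append the edge $v_kv_j$ to every length-$(l-1)$ walk ending at $v_k$: if $\sigma(v_kv_j)=+1$, positive prefixes stay positive and negative prefixes stay negative, so the new signed count contributes $+(w^+_{ik}(l-1) - w^-_{ik}(l-1))$; if $\sigma(v_kv_j)=-1$, the two classes swap, contributing $-(w^+_{ik}(l-1) - w^-_{ik}(l-1))$. In both cases the contribution is $a_{kj}\,f_{ik}(l-1)$, and summing over all penultimate vertices $k$ (with absent edges contributing nothing since $a_{kj}=0$) completes the identity. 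The only genuine obstacle, if any, is phrasing this sign-swap argument cleanly; the combinatorial decomposition itself is the same as in the unsigned case, so no new structural difficulty arises.
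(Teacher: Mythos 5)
Your proof is correct and complete: the induction on $l$, the unique decomposition of a length-$l$ walk into a length-$(l-1)$ prefix plus a final edge, and the check that the signed count $w^+_{ij}(l)-w^-_{ij}(l)$ transforms via multiplication by $a_{kj}$ are exactly the standard argument. The paper explicitly omits its proof (``I omit the proof, which is not difficult''), and yours is evidently the intended one, so there is nothing further to compare.
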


I omit the proof, which is not difficult.

A noteworthy special case is the square of $A$.  Let $p^{+}_{ij}$ denote the number of common positive neighbors of distinct vertices $v_i$ and $v_j$, $p^{-}_{ij}$ the number of their common negative neighbors, and $p^\pm_{ij}$ the number of neighbors that are positive neighbors of one and negative neighbors of the other.

\begin{cor}\label{C:nbrs}
In $A(\Sigma)^2$ the $(i,i)$ entry is $d_{|\Sigma|}(v_i)$ and the $(i,j)$ entry, for $i\neq j$, is $p^{+}_{ij} + p^{-}_{ij} - p^\pm_{ij}.$
\end{cor}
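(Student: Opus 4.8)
The plan is to specialise the walk-counting theorem (Theorem~\ref{T:walkscount}) to the case $l = 2$. By that theorem, the $(i,j)$ entry of $A^2$ equals $w^+_{ij}(2) - w^-_{ij}(2)$, the number of positive walks of length two from $v_i$ to $v_j$ minus the number of negative ones. A walk of length two is determined by its single intermediate vertex $v_k$, which must be adjacent to both endpoints, and its sign is the product $\sigma(v_iv_k)\sigma(v_kv_j)$. So the whole argument reduces to sorting the common neighbors of $v_i$ and $v_j$ according to the signs of the two edges each one carries.

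First I would treat the off-diagonal case $i \neq j$. Each common neighbor $v_k$ falls into exactly one of three classes: a positive neighbor of both (giving a walk of sign $(+)(+) = +$), a negative neighbor of both (sign $(-)(-) = +$), or a positive neighbor of one and a negative neighbor of the other (sign $(+)(-) = -$). These classes are counted by $p^+_{ij}$, $p^-_{ij}$, and $p^\pm_{ij}$ respectively, so $w^+_{ij}(2) = p^+_{ij} + p^-_{ij}$ and $w^-_{ij}(2) = p^\pm_{ij}$. Subtracting gives the claimed value $p^+_{ij} + p^-_{ij} - p^\pm_{ij}$. For the diagonal entry $(i,i)$, a length-two walk from $v_i$ to itself runs to a neighbor $v_k$ and back along the same edge, so its sign is $\sigma(v_iv_k)^2 = +1$; every such walk is positive and there is exactly one per neighbor, whence $w^+_{ii}(2) = d_{|\Sigma|}(v_i)$ and $w^-_{ii}(2) = 0$.

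Since Theorem~\ref{T:walkscount} supplies all the real content, there is no genuine obstacle here; the only point needing care is checking that the classification of common neighbors by edge-sign pattern is exhaustive and disjoint, so that $p^+_{ij}$, $p^-_{ij}$, and $p^\pm_{ij}$ account for every length-two walk exactly once and with the correct sign. Alternatively, one can bypass the theorem and compute directly, writing the $(i,j)$ entry as $\sum_k a_{ik} a_{kj}$ and noting that $a_{ik} a_{kj} = \sigma(v_iv_k)\sigma(v_kv_j)$ takes the values $+1$, $-1$, or $0$ precisely according to the same three cases together with the non-adjacent case; the diagonal then follows from $a_{ik}^2 = 1$ for each neighbor $v_k$ of $v_i$.
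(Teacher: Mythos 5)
Your proof is correct and takes essentially the same route as the paper: both specialise Theorem~\ref{T:walkscount} to $l=2$, identify $w^+_{ij}(2) = p^+_{ij} + p^-_{ij}$ and $w^-_{ij}(2) = p^\pm_{ij}$ for $i \neq j$, and observe that every closed walk $v_iv_kv_i$ has sign $\sigma(v_iv_k)^2 = +1$, so the diagonal entry is $d_{|\Sigma|}(v_i)$. The extra care you take in checking that the three classes of common neighbors are exhaustive and disjoint, and the alternative direct computation via $\sum_k a_{ik}a_{kj}$, are sound but add nothing beyond the paper's argument.
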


\begin{proof}
The diagonal entry is the number of walks $v_iv_jv_i$, since every such walk has the positive sign $\sigma(v_iv_j)^2$.  This number equals the number of neighbors of $v_i$.  As for the off-diagonal entry, $p^{+}_{ij} + p^{-}_{ij} = w^+_{ij}(2)$ and $p^\pm_{ij} = w^-_{ij}(2)$.
\end{proof}

\subsection{Examples}\label{ax}

\begin{outlineone}

\item If $\Sigma$ is complete, i.e., $|\Sigma| = K_n$, then $A(\Sigma)$ is the Seidel adjacency matrix $S(\Sigma^-)$ of the negative subgraph $\Sigma^-$.  That means the Seidel matrix of a graph is the adjacency matrix of a signed complete graph.  This fact inspired my work on adjacency matrices of signed graphs.

\smallskip
\item If $\Sigma$ is bipartite, so that $|\Sigma| \subseteq K_{r,s}$, then $A(\Sigma) = \begin{pmatrix} O & B \\ B\transpose & O \end{pmatrix}$ where $B$ is an $r \times s$ matrix of $0$'s, $+1$'s, and $-1$'s.  If $\Sigma$ is complete bipartite, $B$ has no $0$'s.
\label{X:axbi}

\end{outlineone}

\subsection{Switching}\label{asw}

Switching has a simple effect on $A(\Sigma)$.  Given $X \subseteq V$, switching by $X$ negates both the rows and columns of vertices in $X$.  Given a function $\theta: V \to \{+1,-1\}$, switching by $\theta$ negates both the rows and columns of vertices with $\theta(v_i) = -1$.
In matrix terms, let $\Diag(\theta)$ be the diagonal $(0,\pm1)$-matrix with $\theta(v_i)$ in the $i$th diagonal position.  Then switching by $\theta$ conjugates $A(\Sigma)$ by $\Diag(\theta)$; that is, 
$$
A(\Sigma^\theta) = \Diag(\theta)\inv A(\Sigma) \Diag(\theta).
$$
(Note that $\Diag(\theta)\inv = \Diag(\theta)$; I inserted the inversion in order to show that this is truly conjugation.)  We may conclude that:

\begin{prop}\label{P:swdiag}
Signed graphs\/ $\Sigma_1$ and\/ $\Sigma_2$ on the same vertex set are switching equivalent if and only if their adjacency matrices satisfy $A(\Sigma_2) = D \inv A(\Sigma_1) D$ for some diagonal $(0,\pm1)$-matrix $D$ whose diagonal has no zeros.
\end{prop}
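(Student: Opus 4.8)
The plan is to prove the two implications separately, using the matrix identity for switching that was just established, namely $A(\Sigma^\theta) = \Diag(\theta)\inv A(\Sigma)\Diag(\theta)$, together with the characterization of switching equivalence in terms of switching functions.

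For the forward direction, suppose $\Sigma_1$ and $\Sigma_2$ are switching equivalent. Then there is a switching function $\theta\colon V\to\{+1,-1\}$ with $\Sigma_2 = \Sigma_1^\theta$. Setting $D := \Diag(\theta)$, the displayed formula immediately gives $A(\Sigma_2) = D\inv A(\Sigma_1)D$, and $D$ is a diagonal $(0,\pm1)$-matrix whose diagonal entries $\theta(v_i)$ are all $\pm1$, hence nonzero. This direction is essentially a restatement of the conjugation identity and requires almost no work.

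The converse is where the real content lies. Suppose $A(\Sigma_2) = D\inv A(\Sigma_1)D$ for some diagonal matrix $D$ with all diagonal entries nonzero. The obstacle is that the hypothesis only assumes the nonzero diagonal entries lie in $\{0,\pm1\}$, i.e.\ are $\pm1$; I must convert this matrix relation back into a genuine switching function. The natural move is to define $\theta(v_i)$ to be the $i$th diagonal entry of $D$, so that $D = \Diag(\theta)$ and $\theta\colon V\to\{+1,-1\}$. Then the identity reads $A(\Sigma_2) = \Diag(\theta)\inv A(\Sigma_1)\Diag(\theta) = A(\Sigma_1^\theta)$, using the displayed formula in reverse. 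Since a signed simple graph is determined by its adjacency matrix (as noted in Section \ref{adef}, any symmetric $(0,\pm1)$-matrix with zero diagonal is the adjacency matrix of a unique signed simple graph on the given labelled vertex set), the equality $A(\Sigma_2) = A(\Sigma_1^\theta)$ forces $\Sigma_2 = \Sigma_1^\theta$, so the two graphs are switching equivalent.

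The main subtlety to flag is the passage from the matrix equation to the equality of signed graphs: it relies on the bijection between signed simple graphs and their adjacency matrices, which the paper has already recorded. One should also remark that the proposition is stated for graphs on the \emph{same} vertex set, so there is no isomorphism to worry about---this is switching equivalence, not switching isomorphism---and hence the diagonal conjugation alone suffices without invoking a permutation matrix.
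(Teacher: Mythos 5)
Your proof is correct and takes precisely the route the paper intends: the paper states the proposition as an immediate consequence of the conjugation identity $A(\Sigma^\theta) = \Diag(\theta)\inv A(\Sigma)\Diag(\theta)$ and explicitly omits the ``simple verifications,'' which are exactly the two directions you spell out (reading $D$ as $\Diag(\theta)$ and using the bijection between signed simple graphs and symmetric $(0,\pm1)$-matrices with zero diagonal). Your closing remarks---that the converse needs the matrix-to-graph passage, and that no permutation matrix is needed since the vertex set is fixed---are accurate and flag the right subtleties.
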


We can strengthen this to a criterion for switching \emph{isomorphism} (if not one that is computationally practical).  By `similarly rearranging' the rows and columns of a square matrix $A$, I mean that one applies the same permutation to the rows and to the columns.

\begin{cor}\label{C:swisomdiag}
Signed graphs\/ $\Sigma_1$ and\/ $\Sigma_2$ are switching isomorphic if and only if the rows and columns of $A(\Sigma_1)$ can be similarly rearranged so that $A(\Sigma_2) = D\inv A(\Sigma_1) D$ for some diagonal $(0,\pm1)$-matrix $D$ whose diagonal has no zeros.
\end{cor}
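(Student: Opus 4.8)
The plan is to factor a switching isomorphism into a switching followed by a graph isomorphism, and to recognise each factor as a type of matrix conjugation. First I would record the matrix meaning of ``similarly rearranging'' the rows and columns of $A(\Sigma_1)$: it replaces $A(\Sigma_1)$ by $P\inv A(\Sigma_1)P$ for a permutation matrix $P$, and the resulting matrix is the adjacency matrix of a signed graph $\Sigma_1'$ obtained from $\Sigma_1$ by relabelling its vertices along the permutation, so $\Sigma_1'$ is isomorphic to $\Sigma_1$. With this in hand the matrix condition of the statement becomes $A(\Sigma_2) = D\inv A(\Sigma_1')D$, which by Proposition \ref{P:swdiag} is equivalent to $\Sigma_2$ and $\Sigma_1'$ being switching equivalent.

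The backward direction is then almost immediate: from a permutation $P$ and a diagonal $D$ satisfying the equation, Proposition \ref{P:swdiag} gives that $\Sigma_2$ is switching equivalent to the isomorphic copy $\Sigma_1'$ of $\Sigma_1$, so $\Sigma_2$ is isomorphic to a switching of $\Sigma_1$, which is precisely switching isomorphism.

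For the forward direction I would start from a sign-preserving isomorphism $f$ carrying some switching $\Sigma_1^\theta$ onto $\Sigma_2$. The isomorphism contributes a permutation matrix $P$ with $A(\Sigma_2) = P\inv A(\Sigma_1^\theta)P$, while the switching contributes $A(\Sigma_1^\theta) = \Diag(\theta)\inv A(\Sigma_1)\Diag(\theta)$ by the conjugation formula established just before Proposition \ref{P:swdiag}. Substituting gives $A(\Sigma_2) = P\inv\Diag(\theta)\inv A(\Sigma_1)\Diag(\theta)P$, and setting $D := P\inv\Diag(\theta)P$ rewrites this as $A(\Sigma_2) = D\inv\bigl(P\inv A(\Sigma_1)P\bigr)D$, the required form.

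The one point demanding care---and the only real obstacle---is reconciling the order of the two conjugations with the exact shape prescribed by the statement, which fixes the arrangement of rows and columns \emph{before} conjugating by $D$, whereas the natural factorisation produces the switching on the inside. Commuting the permutation past the diagonal is exactly what the substitution $D = P\inv\Diag(\theta)P$ accomplishes, and I would verify that this $D$ is again admissible: conjugating a diagonal $(0,\pm1)$-matrix by a permutation only permutes its diagonal entries, so $D$ remains diagonal, remains $(0,\pm1)$, and still has no zero on its diagonal.
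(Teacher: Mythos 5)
Your proof is correct and is exactly the ``simple verifications'' the paper omits: it matches the paper's own remarks immediately following the corollary, where similar rearrangement is identified with conjugation by a permutation matrix $P$ and the condition is restated in matrix language as $A(\Sigma_2) = (PD)\inv A(\Sigma_1)(PD)$, which is precisely your factorisation with $D = P\inv\Diag(\theta)P$. The one delicate point you flag---commuting $P$ past $\Diag(\theta)$ and checking that the conjugate remains a diagonal $(0,\pm1)$-matrix with no zeros on the diagonal---is handled correctly, so nothing is missing.
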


I omit the simple verifications.  

Similarly rearranging the rows and columns is represented with matrices as conjugation by a permutation matrix $P$, i.e., $A$ is changed to $P\inv A P$.  Thus, in strictly matrix language, $A(\Sigma_2) = (PD)\inv A(\Sigma_1) (PD)$.

If $\Sigma$ is bipartite, as in Example \ref{ax}.\ref{X:axbi}, with bipartition $V = V_1 \cup V_2$, we can examine the effect of switching $X$ on $B$.  Say the rows of $B$ are indexed by $V_1$ and the columns by $V_2$.  Switching negates each row of $B$ corresponding to a vertex in $X \cap V_1$ and each column corresponding to a vertex in $X \cap V_2$.

\subsection{Eigenvalues and Eigenvectors}\label{e}

An \emph{eigenvalue of\/ $\Sigma$} is an eigenvalue of its adjacency matrix.  The \emph{spectrum} of $\Sigma$ is the list of its eigenvalues with their multiplicities.  Individual eigenvalues and the spectrum may give information about $\Sigma$.  I know of nothing that has been done on this aside from special cases: signed graphs whose underlying graph is regular, which can be treated through the Kirchhoff matrix (see Section \ref{ke}); those with no eigenvalues greater than $2$ (they are, with a few exceptions, line graphs of signed graphs; see Section \ref{lgae}); and Acharya's matricial criterion for balance.  There is one fundamental result that must be the starting point for all investigations:

\begin{prop}\label{P:swspec}
Switching a signed graph does not change its spectrum.
\end{prop}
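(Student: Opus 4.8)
The plan is to reduce the claim to the matrix description of switching that was established just above in Section~\ref{asw}. By Proposition~\ref{P:swdiag}, if $\Sigma_1$ and $\Sigma_2$ are switching equivalent, then their adjacency matrices are related by $A(\Sigma_2) = D\inv A(\Sigma_1) D$ for some invertible diagonal $(0,\pm1)$-matrix $D$. The key observation is that this relation is a conjugation, and conjugate matrices are similar. The whole proposition therefore follows from the standard linear-algebra fact that similar matrices have the same characteristic polynomial, hence the same eigenvalues with the same multiplicities.

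Concretely, first I would invoke Proposition~\ref{P:swdiag} to write $A(\Sigma^\theta) = \Diag(\theta)\inv A(\Sigma) \Diag(\theta)$ for the switching function $\theta$ realising the switch. Then I would compute the characteristic polynomial of the switched matrix:
$$
\det\bigl(A(\Sigma^\theta) - xI\bigr) = \det\bigl(D\inv A(\Sigma) D - x\,D\inv I D\bigr) = \det\bigl(D\inv (A(\Sigma) - xI) D\bigr),
$$
where I abbreviate $D = \Diag(\theta)$ and use that $D\inv I D = I$. Since $\det(D\inv M D) = \det(D)\inv \det(M)\det(D) = \det(M)$ for any matrix $M$, the right-hand side equals $\det(A(\Sigma) - xI)$. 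Thus $\Sigma^\theta$ and $\Sigma$ have identical characteristic polynomials, so identical spectra.

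Because switching equivalence is generated by individual switches and conjugation is transitive, it suffices to handle a single switching function $\theta$, which the display above does. There is essentially no obstacle here: the only thing to be careful about is citing similarity invariance of the spectrum correctly, and noting that $D$ is its own inverse (as remarked in Section~\ref{asw}) so the conjugation is genuine. The argument is short precisely because all the graph-theoretic content has already been packaged into the conjugation formula of Proposition~\ref{P:swdiag}, and the remainder is a one-line determinant identity.
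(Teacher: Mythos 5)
Your proof is correct and follows exactly the paper's route: Proposition~\ref{P:swdiag} packages switching as conjugation by $\Diag(\theta)$, and the paper likewise concludes in one line that conjugation preserves the spectrum. Your only addition is writing out the standard characteristic-polynomial computation that the paper leaves implicit, which is fine.
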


\begin{proof}
This is a corollary of Proposition \ref{P:swdiag}, because conjugating a matrix does not change its spectrum.
\end{proof}

\begin{prop}[B.D.\ Acharya \cite{Aeigen}]\label{P:aebalance}
$\Sigma$ is balanced if and only if\/ $A(\Sigma)$ has the same eigenvalues (with multiplicities) as does $A(|\Sigma|)$.
\end{prop}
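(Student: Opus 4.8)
The plan is to prove the two implications separately, handling the forward direction by switching and the harder converse by a walk-counting (trace) argument. For the forward direction, suppose $\Sigma$ is balanced. By the Switching Lemma (Lemma \ref{L:switching}) it switches to the all-positive signature $+|\Sigma|$, whose adjacency matrix is exactly $A(|\Sigma|)$. Since switching does not change the spectrum (Proposition \ref{P:swspec}), $A(\Sigma)$ and $A(|\Sigma|)$ have the same eigenvalues with the same multiplicities. This implication is essentially immediate.

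For the converse I would argue by contraposition, the key tool being Theorem \ref{T:walkscount}. The power traces $\operatorname{tr}(A^m) = \sum_i \lambda_i^m$ are determined by the spectrum, so equal spectra would force $\operatorname{tr}(A(\Sigma)^m) = \operatorname{tr}(A(|\Sigma|)^m)$ for every $m$. By Theorem \ref{T:walkscount}, the diagonal entry $(A^m)_{ii}$ equals the signed count $w^+_{ii}(m) - w^-_{ii}(m)$ of closed walks of length $m$ based at $v_i$, so $\operatorname{tr}(A(\Sigma)^m) = \sum_W \sigma(W)$, the sum ranging over all closed walks $W$ of length $m$, whereas $\operatorname{tr}(A(|\Sigma|)^m)$ is the plain number of such walks. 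Because $\Sigma$ and $|\Sigma|$ share the same underlying graph, these walks are in natural bijection and $\sigma(W) \le 1$ termwise, giving $\operatorname{tr}(A(\Sigma)^m) \le \operatorname{tr}(A(|\Sigma|)^m)$, with equality exactly when every closed walk of length $m$ is positive.

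Now suppose $\Sigma$ is not balanced. Then it contains a negative circle $C$, say of length $g$. Traversing $C$ from any base vertex, in either direction, is a closed walk of length $g$ whose sign is $\sigma(C) = -1$; thus at least one term in the sum for $\operatorname{tr}(A(\Sigma)^g)$ equals $-1$ while every term is at most $1$, whence $\operatorname{tr}(A(\Sigma)^g) < \operatorname{tr}(A(|\Sigma|)^g)$. The two matrices then have a differing power sum, so their spectra cannot coincide, which is the desired contrapositive. A pleasant feature is that this argument costs nothing extra for disconnected $\Sigma$, since the trace is additive over components.

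The crux, and the step deserving the most care, is the converse: one must recognise that equality of spectra is equivalent to equality of all power traces and then reinterpret those traces through Theorem \ref{T:walkscount} as signed walk counts, so that a single negative circle already forces a strict inequality. A natural alternative for connected $\Sigma$ is a Perron--Frobenius argument, using $|A(\Sigma)| = A(|\Sigma|)$ to get $\lambda_{\max}(A(\Sigma)) \le \lambda_{\max}(A(|\Sigma|))$ and extracting from the equality case, via the positive Perron eigenvector, a $\pm1$-diagonal conjugation that exhibits $\Sigma$ as a switching of $+|\Sigma|$ (hence balanced, by Proposition \ref{P:swdiag}); but that route needs an extra reduction of the disconnected case to its components, which the trace argument avoids entirely.
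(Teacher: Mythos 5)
Your proof is correct and complete; note that the paper itself omits the proof of this proposition (``I omit the proof''), so there is no argument of the author's to compare against, and your write-up in effect supplies the missing details. Both halves check out against the paper's own toolkit. The forward direction is exactly the intended easy one: balance gives switching to the all-positive signature (Lemma \ref{L:switching}), switching preserves the spectrum (Proposition \ref{P:swspec}), and $A(+|\Sigma|)=A(|\Sigma|)$. Your converse via power traces is sound: you only use the trivial direction that equal spectra force $\operatorname{tr}(A(\Sigma)^m)=\operatorname{tr}(A(|\Sigma|)^m)$ for all $m$, and Theorem \ref{T:walkscount} converts these traces into signed versus unsigned closed-walk counts; an unbalanced $\Sigma$ has a negative circle of some length $g$, each of whose $2g$ traversals contributes $-1$ rather than $+1$ at length $g$, so $\operatorname{tr}(A(\Sigma)^g)<\operatorname{tr}(A(|\Sigma|)^g)$ strictly, and the spectra differ. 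The argument is genuinely global, so your remark that disconnectedness costs nothing is accurate. One small caution about your sketched Perron--Frobenius alternative: the needed reduction to components is slightly subtler than you suggest, since equality of the full spectra does not immediately pair each $\Sigma_i$ with its own $|\Sigma_i|$ (one must peel off components realising the common spectral radius and induct, using the equality case of Wielandt's lemma on each connected piece); this is another reason the trace argument is the cleaner route, and it is the standard proof of Acharya's criterion.
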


I omit the proof.  
Acharya's criterion is the more interesting because it cannot be strengthened to an eigenvalue criterion for switching isomorphism.  Although switching-isomorphic signed graphs obviously must have the same eigenvalues, the converse is false, as one can see from the facts that nonisomorphic unsigned graphs can have the same eigenvalues and that $A(+\Gamma) = A(\Gamma)$.

A regular signed graph has $d^\pm(\Sigma)$ as an eigenvalue, associated to the eigenvector $\bj$.  

\begin{prop}\label{P:aevector}
$\Sigma$ is regular if and only if\/ $\bj$ is an eigenvector of both $A(|\Sigma|)$ and $A(\Sigma)$.
\end{prop}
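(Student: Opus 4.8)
The plan is to reduce the whole statement to the row sums of the two matrices, using the elementary fact that, for any matrix $M$, the all-ones vector $\bj$ is an eigenvector of $M$ precisely when all the row sums of $M$ are equal (in which case the common row sum is the eigenvalue). So the first step is to compute the two relevant vectors. Writing $d^+(v_i) := d_{\Sigma^+}(v_i)$ and $d^-(v_i) := d_{\Sigma^-}(v_i)$, the $i$th entry of $A(|\Sigma|)\bj$ is $\sum_j |a_{ij}| = d_{|\Sigma|}(v_i)$, which equals $d^+(v_i) + d^-(v_i)$ because $\Sigma$ has no parallel edges. The $i$th entry of $A(\Sigma)\bj$ is $\sum_j \sigma(v_iv_j) = d^+(v_i) - d^-(v_i)$, since each positive edge at $v_i$ contributes $+1$ and each negative edge contributes $-1$.

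With these two formulas in hand the forward direction is immediate. If $\Sigma$ is regular, then $d^+$ and $d^-$ are each constant functions of the vertex, so both their sum $d^+ + d^-$ and their difference $d^+ - d^-$ are constant; hence $A(|\Sigma|)\bj$ and $A(\Sigma)\bj$ are both scalar multiples of $\bj$, and $\bj$ is an eigenvector of both matrices (with eigenvalues $d^+ + d^-$ and $d^+ - d^-$, the latter being the $d^\pm(\Sigma)$ mentioned above).

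For the converse I would use both hypotheses simultaneously. Being an eigenvector of $A(|\Sigma|)$ forces $d^+(v_i) + d^-(v_i)$ to be independent of $i$, and being an eigenvector of $A(\Sigma)$ forces $d^+(v_i) - d^-(v_i)$ to be independent of $i$. Solving this pair of scalar constraints — adding and subtracting the two constants and halving — shows that $d^+(v_i)$ and $d^-(v_i)$ are each individually constant, so both $\Sigma^+$ and $\Sigma^-$ are regular, which is the definition of $\Sigma$ being regular.

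The computation itself is routine; the one point worth emphasising — and the reason the proposition must invoke \emph{both} matrices — is that neither eigenvector condition alone suffices. Knowing only the sum $d^+ + d^-$, or only the difference $d^+ - d^-$, leaves the individual degrees undetermined. Thus the \emph{main obstacle} is conceptual rather than computational: one must observe that the sum/difference pairing is nondegenerate, so that the two scalar constraints are independent and together pin down $d^+$ and $d^-$ separately. I would make sure the writeup flags this, since it is exactly what distinguishes regularity of $\Sigma$ from the weaker conditions that $|\Sigma|$ be regular or that $\bj$ be an eigenvector of $A(\Sigma)$ alone.
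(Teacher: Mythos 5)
Your proposal is correct and follows essentially the same route as the paper's own proof: both compute the $i$th entries of $A(|\Sigma|)\bj$ and $A(\Sigma)\bj$ as $d_{\Sigma^+}(v_i)+d_{\Sigma^-}(v_i)$ and $d_{\Sigma^+}(v_i)-d_{\Sigma^-}(v_i)$, and observe that constancy of both the sum and the difference is equivalent to constancy of $d_{\Sigma^+}$ and $d_{\Sigma^-}$ separately, i.e., to regularity of $\Sigma$. Your explicit remark that neither eigenvector condition alone suffices is a sound emphasis, but it is the same sum/difference argument the paper leaves implicit.
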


\begin{proof}
The term of $v_i$ in $A(|\Sigma|)\bj$ is $d_{|\Sigma|}(v_i) = d_{\Sigma^+}(v_i) + d_{\Sigma^-}(v_i)$, the sum of degrees in the positive and negative subgraphs.  
The term of $v_i$ in $A(\Sigma)\bj$ is $d_{\Sigma^+}(v_i) - d_{\Sigma^-}(v_i)$, the difference of degrees.  
A necessary and sufficient condition for $\bj$ to be an eigenvector of both is that the sum and difference be independent of $i$; equivalently, that the positive and negative subgraphs be regular; thus, by definition, that $\Sigma$ be regular.
\end{proof}

Although it does not directly provide information about the signed graph itself, from matrix theory we do know something about the relationship between eigenvalues of $\Sigma$ and those of induced subgraphs.

\begin{prop}\label{P:einterlacing}
If $\Sigma_1$ is an induced subgraph of $\Sigma_2$, then the largest eigenvalues satisfy $\lambda^1(\Sigma_1) \leq \lambda^1(\Sigma_2)$.
\end{prop}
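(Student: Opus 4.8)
The plan is to recognise this largest-eigenvalue inequality as the simplest instance of Cauchy interlacing and to prove it directly through the Rayleigh quotient, avoiding any appeal to the full interlacing theorem. The first and most important step is the observation that the adjacency matrix of an induced subgraph is a \emph{principal submatrix} of the adjacency matrix of the whole. Writing $V_1 = V(\Sigma_1) \subseteq V_2 = V(\Sigma_2)$ and ordering the vertices of $\Sigma_2$ so that those of $V_1$ come first, I would display
$$
A(\Sigma_2) = \begin{pmatrix} A(\Sigma_1) & C \\ C\transpose & A' \end{pmatrix}.
$$
The content here lies entirely in the word \emph{induced}: because $\Sigma_1$ retains every edge of $\Sigma_2$ both of whose endpoints lie in $V_1$, together with its sign, the top-left block is exactly $A(\Sigma_1)$ and not merely some matrix supported on $V_1$. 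This is the one place where the hypothesis is used, and it is where a reader should pause; everything afterward is formal.

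The second step invokes the variational characterisation of the top eigenvalue of a real symmetric matrix $M$, namely $\lambda^1(M) = \max_{x \neq 0} (x\transpose M x)/(x\transpose x)$. Both adjacency matrices are symmetric, so this applies. I would take a unit eigenvector $u \in \bbR^{|V_1|}$ of $A(\Sigma_1)$ for the eigenvalue $\lambda^1(\Sigma_1)$ and extend it to $\tilde u \in \bbR^{|V_2|}$ by inserting zeros in the coordinates indexed by $V_2 \setminus V_1$. The block structure then gives $\tilde u\transpose A(\Sigma_2)\tilde u = u\transpose A(\Sigma_1) u = \lambda^1(\Sigma_1)$ while $\tilde u\transpose \tilde u = 1$, so $\tilde u$ is a legitimate competitor in the Rayleigh quotient for $A(\Sigma_2)$.

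Combining the two steps, $\lambda^1(\Sigma_2) \geq \tilde u\transpose A(\Sigma_2)\tilde u = \lambda^1(\Sigma_1)$, which is the assertion. There is no genuine obstacle: the proof is a one-line Rayleigh-quotient estimate once the principal-submatrix observation is in place. If one wished to prove more than the statement claims --- for instance that \emph{all} eigenvalues interlace, $\lambda^{k}(\Sigma_2) \geq \lambda^{k}(\Sigma_1) \geq \lambda^{k + |V_2| - |V_1|}(\Sigma_2)$ --- one would replace the single-vector argument by the Courant--Fischer min--max formula over subspaces of fixed dimension. For the top eigenvalue alone, however, the elementary argument above suffices and fits the expository spirit of the article.
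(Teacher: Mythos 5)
Your proof is correct. The paper itself disposes of this proposition in one line, by citing the general interlacing theorem for real symmetric matrices \cite[Theorem 9.1.1]{AGT}, so your route differs in exposition rather than in underlying substance: you extract the single case needed and prove it from scratch by a Rayleigh-quotient estimate. You put your finger on exactly the right two points. First, the hypothesis \emph{induced} is what makes $A(\Sigma_1)$ a genuine principal submatrix of $A(\Sigma_2)$; for a subgraph that is not induced, deleted edges (or, for signed graphs, altered signs) would corrupt the top-left block and the argument would collapse. Second, padding a unit top eigenvector $u$ of $A(\Sigma_1)$ with zeros gives a unit vector $\tilde u$ with $\tilde u\transpose A(\Sigma_2)\tilde u = u\transpose A(\Sigma_1)u = \lambda^1(\Sigma_1)$, since the cross terms $u\transpose C\,0$ vanish, and the variational characterisation of the largest eigenvalue of a symmetric matrix (which applies because $\sigma(v_iv_j)=\sigma(v_jv_i)$ makes $A(\Sigma_2)$ symmetric) then yields $\lambda^1(\Sigma_2) \geq \lambda^1(\Sigma_1)$. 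What the paper's citation buys is brevity and, at no extra cost, the full set of interlacing inequalities; what your argument buys is self-containedness at the same elementary level as the article's other proofs, and your closing remark correctly identifies the Courant--Fischer min--max formula as the natural upgrade path if one wants all the inequalities $\lambda^{k}(\Sigma_2) \geq \lambda^{k}(\Sigma_1) \geq \lambda^{k+|V_2|-|V_1|}(\Sigma_2)$ rather than just the top one.
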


\begin{proof}
This is an instance of the general interlacing theorem for eigenvalues of real, symmetric matrices \cite[Theorem 9.1.1]{AGT}.
\end{proof}

McKee and Smyth \cite{MS} start with an eigenvalue question and arrive at signed graphs.  If a symmetric integer matrix $A$ has all its eigenvalues (which are necessarily real) in the interval $[-2,2]$, then it is the adjacency matrix of a signed simple graph except for having possibly nonzero elements on the diagonal.  The signed graphs that can appear are subgraphs of members of three infinite families and a handful of sporadic examples.  Their result is one of several reasons to think that $\pm2$ are especially significant bounds on eigenvalues.  I refer the reader to their long paper for the details.

As I mentioned, the eigenvalue properties of signed graphs are an open field of inquiry.

\subsection{The Abelson--Rosenberg Adjacency Matrix}\label{aar} 

An oddity from early in signed graph theory is the adjacency matrix formulated by the inventive social psychologists Abelson and Rosenberg \cite{PsL}.  Their matrix, $R$, has entries $\bo, \bp, \bn, \ba$ (and these symbols have addition and multiplication operations), where $\bo$ stands for nonadjacency, $\bp$ and $\bn$ are for positive and negative adjacency, and $\ba$ is for vertices that are adjacent by both a positive and a negative edge; and it has $\bp$ on the diagonal instead of $\bo$.  
Abelson and Rosenberg employed their symbols to stand for an `unrelated', `positive', `negative', and `ambivalent' relationship between the vertices, which represented persons in a social group.   
The symbol $\ba$, which can occur with a simply signed graph but not a signed simple graph, was necessary for the psychology.

Harary, Norman, and Cartwright \cite[Theorem 13.8]{HNC} used $R$ to show the existence of walks of given length and sign between two specified vertices.  Their theorem says there is a positive (resp., negative) walk of length $l$ from $v_i$ to $v_j$ iff the $(i,j)$ position in $(R-\bp I)^l$ contains $\bp$ or $\ba$ (resp., $\bn$ or $\ba$).

\subsection{Degrees}\label{adeg}

The degree of a vertex has several generalisations to signed graphs.  The \emph{underlying degree} $d_{|\Sigma|}(v)$ is the degree in the underlying graph, that is, the total number of edges incident with $v$.  
The \emph{positive} or \emph{negative degree}, $d^+_\Sigma(v)$ or $d^-_\sigma(v)$, is the degree in the positive subgraph $\Sigma^+$ or the negative subgraph $\Sigma^-$.  The \emph{net degree} is 
$$
d^\pm_\Sigma(v) := d^+_\Sigma(v)  - d^-_\Sigma(v) .
$$

If $\Sigma$ is regular, we can speak of its \emph{degree} (of any kind): it is the degree of any vertex.  For example, the net degree of $\Sigma$ is $d^\pm(\Sigma) := d^\pm(v)$ for every vertex $v$.  $\Sigma$ is regular if and only if $|\Sigma|$ is regular and $A \bj = r \bj$ for some real number $r$, which necessarily equals $d^\pm(\Sigma).$

\subsection{Very Strong Regularity}\label{vsr}

Seidel discovered that a strongly regular graph ${\Gamma_0}$ has a nice definition in terms of its Seidel adjacency matrix $S=S({\Gamma_0})$, namely, that 
$$
S^2 - tS - kI = p(J-I) \text{ and } S\bj = \rho_0\bj
$$ 
for some constants $t, k, p, \rho_0$ (thus in particular $\bj$ is an eigenvector of $S$).  (See \cite{SRG3, STG}.)
The constants have combinatorial interpretations.  For a start, $k=n-1$, the degree of $K_n$.  Given an edge $vw$, the number of vertices adjacent to exactly one of $v$ and $w$ equals $\frac12 (n-2-p+t)$.  Given a nonadjacent pair $v,w$, that number is $\frac12 (n-2-p-t)$.
If $p \neq 0$, the second defining equation $S\bj = \rho_0\bj$ is superfluous.

Contemplating the fact that $S({\Gamma_0})$ is the same matrix as $A(K_{\Gamma_0})$, I was somehow led to the following definition:
A signed graph is \emph{very strongly regular} \cite{WSR} if its adjacency matrix satisfies 
\begin{equation}\label{E:vsr}
A^2 - tA - kI = p\bar A \text{ and } A\bj = \rho_0\bj
\end{equation}
for some constants $t, k, p, \rho_0$.  Here $\bar A$ is the adjacency matrix of the complement of $|\Sigma|$.  The combinatorial interpretation of these parameters is:
      \begin{outlinea}
      \item $|\Sigma|$ is $k$-regular.
      \item $\rho_0 = d^\pm(\Sigma)$, the net degree of every vertex (hence it is an integer).  Hence $\Sigma$ is regular.
      \item $t = t^+_{ij} - t^-_{ij}$ where $t^+_{ij},\ t^-_{ij}$ are the numbers of positive and negative triangles on an edge $e_{ij}$.
      \item $p = p^+_{ij} - p^-_{ij}$ for any pair $v_i, v_j$ of nonadjacent vertices, where $p^+_{ij},\ p^-_{ij}$ are the numbers of positive and negative length-2 paths joining the vertices.
      \item $t$ and $p$ are independent of the choices of adjacent or nonadjacent vertices.
      \end{outlinea}

The big problem is to, in some sense, classify very strongly regular signed graphs.  I am currently working on this.  Various simplifications are helpful.  For instance, $-\Sigma$, whose signature is $-\sigma$, behaves just like $\Sigma$ except that $t$ and $\rho_0$ are negated.  Thus, one may assume that $t$ is nonnegative, or that $\Sigma$ is not all negative, when it is convenient to do so.  

The most important factor in the classification is which of $p$ and $t$ are 0.
As is usual in such problems, there are strong numerical restrictions on the values of $t,k,p,\rho_0$.  It is interesting that some of the types include kinds of matrices that have already been studied for many years.  I will run down the possibilities.  I write $d^\pm$ for $d^\pm(\Sigma)$.
A \emph{weighing matrix} is a $(0,\pm1)$-matrix $W$ such that $W\transpose W$ is a multiple of $I$ \cite{HCD2W}.
\smallskip
      \begin{enumerate}
      \item[(a)] Homogeneous signed graphs can be assumed (by negation) to be all positive.  $+\Gamma$ is very strongly regular if and only if $\Gamma$ is a strongly regular unsigned graph.  This demonstrates that we have a true generalisation of strong regularity.  
      \end{enumerate}
For the rest of the cases I assume $\Sigma$ is inhomogeneous.
      \begin{enumerate}
      \item[(b)] When $p = t = 0$, the defining equations are $A^2 = kI$ and $A\bj = d^\pm\bj$.  The eigenvalues are $\pm\surd k$, one of which is $d^\pm$.  Consequently, $k$ is a square and $A$ is a symmetric weighing matrix with zero diagonal and in each row $\binom{d^\pm+1}{2}$ entries equal to $+1$ and $\binom{d^\pm}{2}$ entries equal to $-1$.
      \item[(c)] When $p = 0$ but $t \neq 0$,  the defining equations are $A^2 - tA - kI = O$ and $A\bj = d^\pm\bj$.  Solving for the eigenvalues shows there is an integer $s \equiv t \text{ (mod }2)$ such that 
      $$k = \frac{s-t}{2} \cdot \frac{s+t}{2} \quad\text{and }\quad d^\pm = \frac{s+t}{2} \,.$$  
      \item[(d)] When $p \neq 0$, from the eigenvector $\bj$ we deduce that $p = [d^\pm(d^\pm+t) - k] / [n-1-k],$ which significantly constrains the numbers.  This case is too complicated for further description here.  For instance, the eigenvalues of $\Sigma$ depend on those of $|\Sigma|^c$.
      \end{enumerate}

Much more can be said, but not here; see \cite{WSR}.

\subsection{History}

To my knowledge, the first adjacency matrix of a signed graph was that of Abelson and Rosenberg.  The standard adjacency matrix, $A(\Sigma)$, appeared soon after, but I am not sure exactly when and where.  Harary certainly used it early on.

Switching of the standard adjacency matrix is implicit in \cite{EPS} and explicit in a form that is equivalent to switching signed complete graphs in \cite{SRG3} but was not made explicitly and generally signed-graphic until later (I am not sure just when).  
Abelson and Rosenberg, as I mentioned, had the concept of switching but they did not develop it far, and I think that would have been difficult given their mathematical formulation.

\section{Orientation}

\subsection{Bidirected Graphs}\label{bidirected}

In a \emph{bidirected graph}, every edge has an independent orientation at each end.  (This concept is due to Jack Edmonds; cf.\ \cite{EJbi}.)  We think of these in two ways: as an arrow at each end, which may point towards or away from the endpoint, and as a sign $\eta(v,e)$ on the end of $e$ that is incident with $v$, which is $+1$ if the arrow points to the endpoint, $-1$ if the arrow is directed away from the endpoint.

\subsection{Oriented Signed Graphs}\label{osg} 

A bidirected graph is naturally sign\-ed by the formula 
\begin{equation}\label{E:sign}
\sigma(e) = - \eta(v,e) \eta(w,e)
\end{equation}
for an edge $e_{vw}$.  An edge is negative if its arrows both point toward their corresponding endpoints (I call such edges \emph{extraverted}), or both away from their endpoints (I call these \emph{introverted} edges).  An edge is positive if one arrow points at its endpoint while the other is directed away from its endpoint.  Thus, a positive edge is just like an ordinary directed edge.

Conversely, an \emph{orientation} of a signed graph $\Sigma$ is a bidirection $\eta$ of $|\Sigma|$ that satisfies the sign formula \eqref{E:sign}.

\emph{Reorienting} an edge $e_{vw}$ means replacing $\eta(v,e)$ and $\eta(w,e)$ by their negatives.   In terms of arrows, it means reversing the arrows at both endpoints of the edge.  This does not change the sign of the edge.

\begin{figure}[hb]
\includegraphics[scale=0.8]{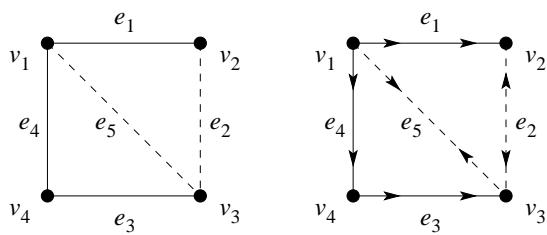}
\caption{The signed graph $\Sigma_4$ and an orientation of it.}
\label{F:sgor}
\end{figure}
%

\subsection{Switching}\label{osw} 

One switches a vertex set $X$ in a bidirected graph by changing the signs of all the edge ends incident with a vertex in $X$.  In terms of a switching function $\theta: V \to \{+,-\}$, one has $\eta^\theta(v,e) := \theta(v)\eta(v,e)$.

Recall that switching the entire vertex set $V$ does not change a signed graph; but it does have an effect on an orientation: it reverses all the arrows.

\section{Incidence Matrices}

\subsection{Definition}\label{idef}

An \emph{incidence matrix} of $\Sigma$ is a $V \times E$ matrix in which the column of edge $e$ has two entries $\pm1$, one in the row of each endpoint of $e$, and 0's elsewhere.  The two nonzero entries must have product equal to $-\sigma(e)$; that is, they are equal if $e$ is negative, but if $e$ is positive, one is $+1$ and the other is $-1$.

The notation I use for an incidence matrix of a signed graph is $\Eta(\Sigma)$ (read `Eta') $= (\eta_{ve})_{v\in V, e\in E}.$
\begin{figure}[hb]
$\Eta(\Sigma_4,\eta) = 
\begin{pmatrix}
-1 & 0 & 0 & -1 & -1 \\
+1 & +1 & 0 & 0 & 0 \\
0 & +1 & +1 & 0 & -1 \\
0 & 0 & -1 & +1 & 0 \\
\end{pmatrix}
$
\caption{The incidence matrix of $\Sigma_4$ corresponding to the orientation $\eta$ in Figure \ref{F:sgor}.}
\label{F:sgincidence}
\end{figure}

The incidence matrix is not unique.  The choice of signs in each column reflects a choice of orientation $\eta$ of $\Sigma$; in fact, the $(v,e)$ entry in $\Eta(\Sigma)$ is equal to 
$$
\begin{cases}
\eta(v,e)	&\text{ if } e \text{ is incident with } v, \\
0	&\text{ if } e \text{ is not incident with } v.
\end{cases}
$$
(Thus, we may define $\eta(v,e) := 0$ in the latter case.  Then $\eta(v,e)=\eta_{ve}$; the only difference between the two is the point of view: the former is $\eta$ regarded as a bidirection and the latter is a matrix entry.)  Conversely, the entries of an incidence matrix of $\Sigma$ determine an orientation $\eta$.

The incidence matrix can be treated as a matrix over any field $\bk$, or indeed any ring, in particular over the ring of integers, or the $2$-element field.

\subsection{Switching and Reorientation}\label{isw}

The effect on $\Eta(\Sigma)$ of switching $X \subseteq V$ is to negate the rows corresponding to the vertices in $X$.  In terms of a switching function $\theta$ and the matrix $\Diag(\theta)$, $\Eta(\Sigma)$ switches to $\Diag(\theta)\Eta(\Sigma)$.  

The effect of reorienting edges is to negate the corresponding columns of $\Eta(\Sigma)$.

\subsection{Rank}\label{irank}

The rank of $\Eta(\Sigma)$ is a basic fact that generalises more widely known but limited results about incidence matrices.

\begin{thm}[{Jeurissen \cite{Jeur}, Zaslavsky \cite[Section 8A]{SG}}]\label{T:rank}
If\/ $\bk$ has characteristic $2$, then $\Eta(\Sigma)$ has rank $n - c(\Sigma)$.  Otherwise, $\Eta(\Sigma)$ has rank $n - b(\Sigma)$.
\end{thm}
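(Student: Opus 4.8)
The plan is to compute the rank as $n$ minus the dimension of the left null space of $\Eta(\Sigma)$, and to identify that null space explicitly. A row vector $x = (x_v)_{v\in V} \in \bk^V$ satisfies $x\transpose\Eta(\Sigma) = 0$ precisely when, for every edge $e = vw$, the column condition $x_v\eta_{ve} + x_w\eta_{we} = 0$ holds. Since $\eta_{ve}\eta_{we} = -\sigma(e)$ and each $\eta_{ve}$ is a unit with $\eta_{ve}^2 = 1$, multiplying through by $\eta_{ve}$ rearranges this to the single condition
$$
x_v = \sigma(e)\,x_w \quad\text{for every edge } e=vw.
$$
This is manifestly independent of the chosen orientation, as it must be, since reorienting an edge merely negates a column and cannot change the rank. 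First I would observe that the system couples only vertices in the same connected component, so the left null space splits as a direct sum over the components of $\Sigma$; it therefore suffices to compute, component by component, the dimension of the solution space.

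Next I would fix a component and analyse it with a rooted spanning tree $T$, exactly as in the proof of the Switching Lemma. Reading $x_v = \sigma(e)x_w$ along the tree edges determines $x_v = \sigma(T_{rv})\,x_r$ for every vertex $v$ in terms of the single value $x_r$ at the root, so the tree alone leaves a one-parameter family. The remaining constraints come from the non-tree edges. For a non-tree edge $e = vw$, substituting the tree values and using that $\sigma(T_{rv})\sigma(T_{rw})\sigma(e)$ equals the sign $\sigma(C_e)$ of the fundamental circle $C_e$ (because the $v$--$w$ tree path has sign $\sigma(T_{rv})\sigma(T_{rw})$, the overlap near $r$ cancelling), the constraint collapses to
$$
\bigl(1 - \sigma(C_e)\bigr)\,x_r = 0.
$$
This is the crux, and it is exactly here that the characteristic enters: a balanced fundamental circle ($\sigma(C_e)=+1$) imposes nothing, whereas an unbalanced one ($\sigma(C_e)=-1$) imposes $2x_r = 0$.

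From here the two cases separate cleanly. When $\operatorname{char}\bk \neq 2$ the factor $2$ is invertible, so a single negative fundamental circle forces $x_r = 0$ and hence $x \equiv 0$ on that component; conversely, if every fundamental circle is positive then the component is balanced (by the Switching Lemma: switching the tree to all positive leaves each non-tree sign equal to its fundamental-circle sign, so no negative edge survives), and the solution space is one-dimensional. Thus each balanced component contributes $1$ and each unbalanced component contributes $0$, giving left-null-space dimension $b(\Sigma)$ and rank $n - b(\Sigma)$. When $\operatorname{char}\bk = 2$ the factor $2$ vanishes, the non-tree edges impose no constraint whatsoever, every component contributes a one-dimensional space (the constant vectors, since all signs reduce to $1$), and the rank is $n - c(\Sigma)$.

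The main obstacle I anticipate is not any single computation but keeping the bookkeeping honest: verifying that the per-edge condition genuinely reduces to $x_v = \sigma(e)x_w$ regardless of orientation, that the fundamental-circle sign appears with the stated parity, and---most importantly---that in odd characteristic one negative fundamental circle already annihilates the whole component's contribution, so that the count is exactly the number of balanced components rather than merely an upper bound. Invoking the Switching Lemma to equate ``all fundamental circles positive'' with ``balanced'' is what makes this last point painless.
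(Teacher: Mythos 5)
Your proof is correct, and it takes a genuinely different route from the paper's. The paper also reduces to a connected component, but then it switches a spanning tree $T$ to all positive and bounds the rank from below by exhibiting explicit structure inside the matrix: in the balanced case $\Eta(\Sigma)$ becomes the oriented incidence matrix of $|\Sigma|$, of rank $n-1$, while in the unbalanced case the submatrix $\begin{pmatrix} \Eta(T) & \Eta(e) \end{pmatrix}$ for a negative edge $e$ has $n$ independent rows, because the only dependence among the rows of $\Eta(T)$ is that they sum to $0$, whereas the column of $e$ sums to $\pm 2$. You instead compute the left kernel exactly: the per-edge condition $x_v = \sigma(e)x_w$, transport along the tree, and the constraint $\bigl(1-\sigma(C_e)\bigr)x_r = 0$ at each non-tree edge. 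The two arguments are dual rather than unrelated---your coefficient $1-\sigma(C_e)$ is the paper's column sum $\pm 2$ in disguise, and your tree transport $x_v = \sigma(T_{rv})\,x_r$ is exactly the switching function $\theta(w) = \sigma(T_{vw})$ used in the paper's proof of the Switching Lemma---but the mechanics differ: the paper certifies a lower bound on the rank by a nonsingular-submatrix argument, while you determine the cokernel outright. What your version buys: a single computation that specialises uniformly to both characteristics (in characteristic $2$ the non-tree constraints simply vanish), no need to switch the matrix at all, and an explicit basis for the null space, one vector per balanced component, namely the potential function $\mu$ with $\sigma(e_{vw}) = \mu(v)\mu(w)$ that encodes the component's Harary bipartition---a fact worth recording in its own right. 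What the paper's version buys: the determinant information ($\det \Eta(T) = \pm 1$ after deleting a row, and $\det \Eta(F) = \pm 2^{c(F)}$ for unbalanced pseudoforests) that is reused immediately in the Matrix-Tree Theorem \ref{T:mt}, so its extra matrix bookkeeping pays off later in the paper. Your one delicate step---that in odd characteristic a single negative fundamental circle kills the whole component's contribution, and that ``all fundamental circles positive'' is equivalent to balance---is handled correctly by your appeal to the Switching Lemma, so there is no gap.
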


\begin{proof}  
As $\Eta(\Sigma)$ can be written in the block form 
$$
\begin{pmatrix} \Eta(\Sigma_1) & O &\cdots & O \\ O & \Eta(\Sigma_2) &\cdots & O \\ &&\cdots& \\ O & O &\cdots & \Eta(\Sigma_k) \end{pmatrix},
$$
where $\Sigma_1, \Sigma_2, \ldots, \Sigma_k$ are the connected components of $\Sigma$, it suffices to prove the rank formula for a connected signed graph.  For $S \subseteq E$, write $\Eta(S)$ for the submatrix of $\Eta(\Sigma)$ that consists of the columns corresponding to $S$.

Let $T$ be a spanning tree of $\Sigma$ and switch so $T$ is all positive.  If $\Sigma$ is balanced, then after switching it is all positive.  Then its incidence matrix is identical with that of $|\Sigma|$, which is well known to have rank $n-1$ (not $n$, because the rows sum to $0$; not less than $n$, because $\Eta(T)$ after dropping one row has determinant $\pm1$).  

If $2=0$ in $\bk$, then the definition shows that $\Eta(\Sigma) = \Eta(|\Sigma|)$ over $\bk$, and by the preceding argument the rank is $n-1$.

If $\Sigma$ is unbalanced, there is a negative edge $e$.  Consider the submatrix $\Eta(T\cup\{e\})$ of $\Eta(\Sigma)$ that consists of the columns corresponding to $T$ and $e$.  It has the form $\begin{pmatrix} \Eta(T) & \Eta(e) \end{pmatrix}.$  The last column has two $+1$'s or two $-1$'s, so its sum is $\pm2$.  The only linear dependence among the rows of $\Eta(T)$ is that they sum to $0$; but the sum of the rows of the matrix $\begin{pmatrix} \Eta(T) \ \Eta(e) \end{pmatrix}$ has $\pm2$ in its last column.  Therefore, if $2\neq0$ in $\bk$ the rows of $\begin{pmatrix} \Eta(T) \ \Eta(e) \end{pmatrix}$ are linearly independent; this submatrix has rank $n$.  Thus, $\Eta(\Sigma)$ has rank $n$.
\end{proof}

Since the incidence matrix of $|\Sigma|$ has rank $n-c(\Sigma)$, and $b(\Sigma) = c(\Sigma)$ when, and only when, $\Sigma$ is balanced, $\Eta(\Sigma)$ has the same rank as $\Eta(|\Sigma|)$ if and only if $\Sigma$ is balanced.

\subsection{The Kirchhoff Matrix and Matrix-Tree Theorems}\label{k}

The \emph{Kirchhoff matrix} (also called the \emph{Laplacian}) is 
\begin{equation}\label{E:kirchhoff}
K(\Sigma) := \Eta(\Sigma) \Eta(\Sigma)\transpose = D(|\Sigma|)-A(\Sigma),
\end{equation}
where $D(|\Sigma|)$ is the degree matrix of $|\Sigma|$, that is, the diagonal matrix whose diagonal entries are the degrees of the vertices in the underlying graph.\\

\begin{figure}[hb]
$
K(\Sigma_4) = \Eta(\Sigma_4,\eta) \Eta(\Sigma_4,\eta)\transpose = 
\begin{pmatrix}
3 & -1 & 1 & -1 \\
-1 & 2 & 1 & 0 \\
1 & 1 & 3 & -1 \\
-1 & 0 & -1 & 2
\end{pmatrix}
$
\caption{The Kirchoff matrix of $\Sigma_4$ from Figure \ref{F:sgor}.}
\label{F:sgkirchhoff}
\end{figure}

\begin{lem}\label{L:krank}
The rank of\/ $K$ (over the real numbers) is $n-b(\Sigma)$.  Its nullity is $b(\Sigma)$.
\end{lem}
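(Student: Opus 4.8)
The plan is to reduce the statement entirely to the rank formula for the incidence matrix already proved in Theorem \ref{T:rank}, using the fact that $K = \Eta(\Sigma)\Eta(\Sigma)\transpose$ is a Gram matrix. The key linear-algebra input is that for any real matrix $M$ one has $\operatorname{rank}(MM\transpose) = \operatorname{rank}(M)$. Applied with $M = \Eta(\Sigma)$ (an $n \times |E|$ matrix over $\bbR$), this gives $\operatorname{rank} K = \operatorname{rank}\Eta(\Sigma)$, and since $\bbR$ has characteristic $\neq 2$, Theorem \ref{T:rank} yields $\operatorname{rank}\Eta(\Sigma) = n - b(\Sigma)$. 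The nullity is then $n - \operatorname{rank} K = b(\Sigma)$, completing both assertions.

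Concretely, I would first establish the rank identity by comparing null spaces. For $x \in \bbR^n$, I would note the chain of equivalences $K x = 0 \iff x\transpose \Eta(\Sigma)\Eta(\Sigma)\transpose x = 0 \iff \|\Eta(\Sigma)\transpose x\|^2 = 0 \iff \Eta(\Sigma)\transpose x = 0$, where $\|\cdot\|$ is the ordinary Euclidean norm. This shows that the kernel of $K$ coincides with the kernel of $\Eta(\Sigma)\transpose$. Taking dimensions and invoking the rank–nullity theorem for $\Eta(\Sigma)\transpose$ gives $\operatorname{nullity}(K) = \operatorname{nullity}(\Eta(\Sigma)\transpose) = n - \operatorname{rank}\Eta(\Sigma)$, hence $\operatorname{rank} K = \operatorname{rank}\Eta(\Sigma)$. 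The second step is then a one-line citation of Theorem \ref{T:rank} in the non-characteristic-$2$ case.

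The only genuinely delicate point — and the reason the lemma is stated \emph{over the real numbers} — is the step $\|\Eta(\Sigma)\transpose x\|^2 = 0 \Rightarrow \Eta(\Sigma)\transpose x = 0$. This uses positive-definiteness of the real inner product and would fail over a field where a nonzero vector can be isotropic (for instance in characteristic $2$, which is exactly the case Theorem \ref{T:rank} treats separately). So I expect the main obstacle to be conceptual rather than computational: one must flag that the Gram-matrix rank identity is a real-field phenomenon, not a formal consequence of $K = \Eta\Eta\transpose$, so that the value $n - b(\Sigma)$ (rather than the characteristic-$2$ value $n - c(\Sigma)$) is the correct one to propagate. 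No separate computation of $K$ is needed; the block decomposition over connected components used in Theorem \ref{T:rank} already accounts for the behaviour of each component, so the formula transfers verbatim.
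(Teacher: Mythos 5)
Your proposal is correct and takes essentially the same route as the paper, which likewise reduces the lemma to Theorem \ref{T:rank} via the fact that a real matrix of the form $MM\transpose$ is positive semidefinite with $\operatorname{rank}(MM\transpose) = \operatorname{rank}(M)$, applied to $M = \Eta(\Sigma)$ over $\bbR$. Your kernel-comparison argument $\|\Eta(\Sigma)\transpose x\|^2 = 0 \Rightarrow \Eta(\Sigma)\transpose x = 0$ simply spells out the detail the paper cites as standard matrix theory, and your observation that this step is a real-field (non-isotropic) phenomenon correctly identifies why the characteristic-$2$ case of Theorem \ref{T:rank} does not intervene.
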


\begin{proof}
A matrix of the form $MM\transpose$ is positive semidefinite and has rank equal to the rank of $M$.  By standard matrix theory, its eigenvalues are all non-negative.  In our case, $M = \Eta(\Sigma)$ has rank $n-b(\Sigma)$ by Theorem \ref{T:rank}.
\end{proof}

The classical Matrix-Tree Theorem expresses the number of spanning trees of a graph in terms of the Kirchhoff matrix: the number is $\det K_{ij}$ where $K_{ij}$ is $K$ with any one row and column deleted.  I state two signed-graphic analogs.  For proofs see the references; the first is not hard (it uses the Binet--Cauchy theorem in the standard way), but the second is rather complicated both to state fully and to prove.

\begin{thm} [Zaslavsky {\cite[Section 8A]{SG}}]\label{T:mt}
The determinant $\det K$ is the sum, over all pseudoforests\/ $F$ with $n$ edges and with no positive circles, of\/ $4^{c(F)}$, where $c(F)$ is the number of components of $F$.
\end{thm}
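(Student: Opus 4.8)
The plan is to apply the Binet--Cauchy theorem to the factorisation $K = \Eta(\Sigma)\Eta(\Sigma)\transpose$ from \eqref{E:kirchhoff}. Write $\Eta = \Eta(\Sigma)$ as an $n \times m$ matrix with $m = |E|$, and for an edge set $S$ with $|S| = n$ let $\Eta_S$ be the $n \times n$ submatrix formed by the columns indexed by $S$. Binet--Cauchy then gives
$$
\det K = \sum_{|S| = n} \det(\Eta_S)\,\det(\Eta_S\transpose) = \sum_{|S| = n} (\det \Eta_S)^2 .
$$
Every term is thus the square of an $n \times n$ minor of $\Eta$, and the whole problem reduces to two steps: (i) deciding for which spanning subgraphs $F = (V,S)$ the minor is nonzero, and (ii) evaluating $(\det \Eta_S)^2$ in those cases.

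For step (i) I would invoke the rank formula of Theorem \ref{T:rank}, over the reals since $K$ is a real matrix. The columns of $\Eta_S$ are independent iff $\operatorname{rank}\Eta(S) = n$, and since $(V,S)$ has $n$ vertices, Theorem \ref{T:rank} makes this rank $n - b(S)$; so the minor is nonzero exactly when $b(S) = 0$, i.e.\ $F$ has no balanced component. Now $F$ has $n$ vertices and $n$ edges, and $\Eta_S$ is block diagonal along the components $F_1,\dots,F_c$ of $F$. A connected graph on $n_i$ vertices has at least $n_i - 1$ edges, with equality iff it is a tree, and a tree is balanced; hence every unbalanced component has $m_i \geq n_i$ edges. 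Since each component must be unbalanced (from $b(S)=0$) and $\sum m_i = \sum n_i = n$, we are forced to $m_i = n_i$ for all $i$. Each component is therefore a $1$-tree whose unique circle is negative, and these $F$ are precisely the pseudoforests with $n$ edges and no positive circles.

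For step (ii), block diagonality gives $(\det \Eta_S)^2 = \prod_{i=1}^c (\det \Eta_{S_i})^2$, so it suffices to show each unbalanced unicyclic component contributes a factor $4$, i.e.\ $\det \Eta_{S_i} = \pm 2$. Because switching multiplies rows of $\Eta$ by $\pm1$ (Section \ref{isw}), the quantity $(\det \Eta_{S_i})^2$ is switching-invariant, so I may switch a spanning tree of the component to be all positive; the one remaining non-tree edge then carries the sign of the unique circle, which is negative, so its column has two equal entries. I would then prune pendant edges: a leaf vertex yields a row with a single nonzero entry $\pm1$, and cofactor expansion along that row strips off the leaf and its edge at the cost of a factor $\pm1$, leaving the incidence matrix of a smaller unicyclic graph. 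Iterating reduces to the incidence matrix of the unbalanced circle alone, whose rows no longer sum to zero (because of the equal-signed closing column) and whose determinant a direct computation shows to be $\pm2$. Hence each factor is $4$ and $(\det \Eta_S)^2 = 4^{c(F)}$. Combining with the Binet--Cauchy expansion yields $\det K = \sum_F 4^{c(F)}$ over the stated pseudoforests.

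The routine parts are the Binet--Cauchy reduction and the identification of the supporting subgraphs through Theorem \ref{T:rank}. The genuinely content-bearing step, and the one I expect to require the most care, is the evaluation $\det \Eta_{S_i} = \pm 2$: one must check that the negative closing edge really breaks the usual rank defect of an incidence matrix and pin down the value $\pm2$ for the circle, then confirm that the leaf-pruning reduction preserves the unicyclic structure at each stage.
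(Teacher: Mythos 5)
Your proof is correct and follows exactly the route the paper intends: it applies Binet--Cauchy to $K = \Eta\Eta\transpose$ ``in the standard way,'' identifies the supporting $n$-edge subgraphs via the rank formula of Theorem \ref{T:rank} as the pseudoforests whose every component is an unbalanced $1$-tree, and reduces to the key evaluation $\det\Eta(F) = \pm2^{c(F)}$, which is precisely the remark the paper makes after the theorem. You have merely filled in the details (switching the spanning tree positive, leaf pruning, and the $\pm2$ determinant of a negative circle) that the paper leaves to the reference \cite[Section 8A]{SG}.
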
 

The reason is that $\det \Eta(F) = 2^{c(F)}$.  Note that the pseudoforests in the theorem, because they have as many edges as vertices, can have no tree components.

\begin{thm}[Chaiken's All-Minors Matrix-Tree Theorem {\cite{AMMT}}]\label{T:ammt}
From $K$ delete $k$ rows, corresponding to vertices $r_1,\ldots, r_k$, and $k$ columns, corresponding to $c_1,\ldots,c_k$, and then take the determinant.  The resulting number is the sum of\/ $\pm4^q$, where $q$ is the number of circles in $F$, over all $n-k$-edge spanning pseudoforests without positive circles such that each tree component of\/ $F$ contains exactly one $r_i$ and one $c_j$.  The sign of the term is given by a complicated rule.
\end{thm}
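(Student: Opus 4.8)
The plan is to follow the classical proof of the all-minors matrix-tree theorem, using the factorisation $K = \Eta(\Sigma)\Eta(\Sigma)\transpose$ together with the Binet--Cauchy expansion, but replacing the unsigned notion of ``spanning tree'' by the signed-graphic structures appropriate to $\Eta(\Sigma)$. I work over $\bbZ$, so every determinant in sight is an integer.

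First I would fix notation. Write $R = \{r_1,\dots,r_k\}$ and $C = \{c_1,\dots,c_k\}$ for the deleted vertices, $\bar R = V\setminus R$, $\bar C = V\setminus C$, and let $K[\bar R,\bar C]$ denote the submatrix of $K$ with rows indexed by $\bar R$ and columns by $\bar C$; the quantity in the statement is $\det K[\bar R,\bar C]$. Since $K = \Eta\Eta\transpose$, this minor factors as
$$
K[\bar R,\bar C] = \Eta[\bar R,E]\,\bigl(\Eta[\bar C,E]\bigr)\transpose,
$$
a product of an $(n-k)\times|E|$ matrix with the transpose of another such matrix. The Binet--Cauchy theorem then yields
$$
\det K[\bar R,\bar C] = \sum_{S} \det \Eta[\bar R,S]\cdot \det \Eta[\bar C,S],
$$
the sum running over all $(n-k)$-element edge sets $S\subseteq E$.

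The heart of the proof is a lemma identifying the surviving terms. I would prove: for $|S| = n-k$, the factor $\det \Eta[\bar R,S]$ is nonzero exactly when $(V,S)$ is a spanning pseudoforest with no positive circle, having precisely $k$ tree components, each containing exactly one vertex of $R$ (so every unicyclic component is disjoint from $R$); and then $\bigl|\det \Eta[\bar R,S]\bigr| = 2^{q}$, where $q$ is the number of circles of $S$. The argument is to reorder rows and columns so that $\Eta[\bar R,S]$ becomes block diagonal by the components of $(V,S)$. Squareness of each block forces every component to be a tree (needing exactly one $R$-vertex, whose deletion leaves a tree incidence determinant $\pm1$) or a $1$-tree (needing no $R$-vertex, with full incidence determinant $\pm2$ when its circle is negative and $0$ when positive, exactly as in the rank computation of Theorem~\ref{T:rank} and the evaluation $\det\Eta(F)=2^{c(F)}$ following Theorem~\ref{T:mt}). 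The product of the block determinants is $2^{q}$, since $q$ counts the $1$-tree components.

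Applying the lemma to both factors, a Binet--Cauchy term survives only when $(V,S)$ is a spanning pseudoforest with no positive circle whose $k$ tree components each contain exactly one $r_i$ \emph{and} exactly one $c_j$; each factor then contributes $\pm 2^{q}$, so the product is $\pm 4^{q}$, giving precisely the claimed sum. The main obstacle is the sign: the signs of $\det \Eta[\bar R,S]$ and $\det \Eta[\bar C,S]$ depend on the chosen orientation of $\Sigma$, on the orderings of the $r_i$ and the $c_j$, and on how these vertices are distributed among the tree components (Chaiken's unsigned rule is already intricate for just this reason). Pinning down the exact sign of each $\pm 4^{q}$ therefore requires a careful orientation-dependent bookkeeping; this is the ``complicated rule'' in the statement, and I would present it as the genuinely laborious part of the argument.
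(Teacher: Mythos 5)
Your skeleton is sound, but note that there is no proof in the paper to compare against: the paper states Theorem~\ref{T:ammt} with a pointer to Chaiken \cite{AMMT} and the explicit remark that it is ``rather complicated both to state fully and to prove,'' giving no argument of its own. What you propose is the natural all-minors extension of the Binet--Cauchy method that the paper says proves Theorem~\ref{T:mt} ``in the standard way,'' and it works: $K[\bar R,\bar C]=\Eta[\bar R,E]\,\Eta[\bar C,E]\transpose$, Binet--Cauchy reduces the minor to the sum of $\det\Eta[\bar R,S]\det\Eta[\bar C,S]$ over $(n-k)$-edge sets $S$, the matrix $\Eta[\bar R,S]$ is a direct sum of blocks indexed by the components of $(V,S)$, squareness of every block forces each component to be a tree meeting $R$ (resp.\ $C$) exactly once or a $1$-tree avoiding it, and the block determinants $\pm1$ (tree with one row deleted), $\pm2$ (negative circle), $0$ (positive circle) yield exactly the stated support and magnitude $4^q$; since the statement itself leaves the sign as ``a complicated rule,'' this establishes the theorem as stated. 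Chaiken's cited proof is different in character---it is a direct combinatorial expansion of the determinant with a sign-reversing involution, which is precisely what produces his explicit sign rule---so your route buys the support and magnitude cheaply while burying the sign in incidence determinants and permutation parities. Two corrections to your closing paragraph: a unicyclic component lies entirely in both $\bar R$ and $\bar C$, so its block enters the Binet--Cauchy term as $(\det\Eta(F_i))^2=4>0$, meaning the sign comes only from the tree blocks and the relative orderings of rows in the two factors (the column orderings being common, their permutation signs square away); and the product $\det\Eta[\bar R,S]\,\det\Eta[\bar C,S]$ is in fact orientation-invariant, since reorienting an edge of $S$ negates the same column in both factors and edges outside $S$ are irrelevant---so, contrary to your last sentence, no orientation-dependent bookkeeping is needed, and the ``complicated rule'' depends only on how the $r_i$ and $c_j$ are matched up within tree components, as in Chaiken's unsigned rule.
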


Chaiken's theorem is even more general: the edges can be weighted.

\subsection{Eigenvalues of the Kirchhoff Matrix}\label{ke}

The eigenvalues of the Kirchhoff matrix are a natural topic of study.  As a simple example, they constrain the eigenvalues of $\Sigma$ when the underlying graph is regular, in just the same way as with ordinary graphs.

\begin{thm}\label{T:areg}
If the underlying graph of\/ $\Sigma$ is regular of degree $k$, then all eigenvalues of\/ $\Sigma$ are $\leq k$.  The value $k$ is an eigenvalue of multiplicity $b(\Sigma)$; thus, $k$ is an eigenvalue if and only if\/ $\Sigma$ has a balanced component.
\end{thm}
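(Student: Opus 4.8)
The plan is to reduce everything to the Kirchhoff matrix, whose spectral behaviour is already controlled by Lemma \ref{L:krank} together with the identity \eqref{E:kirchhoff}. The crucial observation is that $k$-regularity of $|\Sigma|$ makes the degree matrix a scalar matrix: $D(|\Sigma|) = kI$. Substituting this into \eqref{E:kirchhoff} gives
$$
K(\Sigma) = kI - A(\Sigma),
$$
so $K$ and $A(\Sigma)$ are simultaneously diagonalisable and their eigenvalues are related by $\mu = k - \lambda$, where $\mu$ ranges over the eigenvalues of $K$ and $\lambda$ over those of $A(\Sigma)$; moreover the correspondence $\lambda \mapsto k-\lambda$ leaves the eigenspaces unchanged.

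First I would establish the upper bound. By Lemma \ref{L:krank}, $K = \Eta(\Sigma)\Eta(\Sigma)\transpose$ is positive semidefinite, so every eigenvalue $\mu$ of $K$ satisfies $\mu \geq 0$. Writing $\mu = k - \lambda$ then yields $\lambda \leq k$ for every eigenvalue $\lambda$ of $\Sigma$, which is the first assertion.

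Next I would pin down the multiplicity of $k$. The value $k$ is an eigenvalue of $A(\Sigma)$ exactly when $0 = k - k$ is an eigenvalue of $K$, and with the same multiplicity, since $\lambda \mapsto k-\lambda$ preserves eigenspaces. But the multiplicity of $0$ as an eigenvalue of $K$ is precisely the nullity of $K$, which Lemma \ref{L:krank} identifies as $b(\Sigma)$. Hence $k$ is an eigenvalue of $\Sigma$ of multiplicity $b(\Sigma)$. Finally, $b(\Sigma) > 0$ if and only if some component of $\Sigma$ is balanced, which gives the last statement.

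There is no real obstacle here beyond assembling the earlier results in the correct order; the one point that must be checked carefully is that regularity is exactly what collapses $D(|\Sigma|)$ to $kI$ and thereby converts the rank/nullity data for $K$ into eigenvalue data for $A(\Sigma)$. Everything else is the elementary observation that the scalar shift $A(\Sigma) \mapsto kI - A(\Sigma)$ reflects the spectrum about $k/2$ and fixes each eigenspace, so that both the semidefiniteness bound and the nullity count transfer verbatim from $K$ to $A(\Sigma)$.
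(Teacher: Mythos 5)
Your proposal is correct and follows essentially the same route as the paper's own proof: regularity collapses $D(|\Sigma|)$ to $kI$, the identity $K = kI - A(\Sigma)$ transfers positive semidefiniteness of $K$ into the bound $\lambda \leq k$, and the nullity $b(\Sigma)$ from Lemma \ref{L:krank} becomes the multiplicity of the eigenvalue $k$. No gaps; your explicit remark that the shift fixes eigenspaces is a slight polish on the paper's brisker wording.
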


\begin{proof}
If $|\Sigma|$ is regular of degree $k$, then $D(|\Sigma|) = kI$.  Thus, $K = kI - A$.  The Kirchhoff matrix is positive semidefinite and has nullity $b(\Sigma)$ by Lemma \ref{L:krank}.  Therefore, all eigenvalues of $K$ are at least $0$.  The eigenvalues $\lambda_i^A$ of $A$ are determined by $\lambda_i^K = k - \lambda_i^A$ where $\lambda_1^K, \ldots, \lambda_n^K$ are the eigenvalues of $K$.  It follows that all $\lambda_i^A \leq k$ and that the eigenvalue $k$ of $A$ has the same multiplicity as the corresponding eigenvalue $0$ of $A$.
The multiplicity of $0$ as an eigenvalue of $K$ is the nullity of $K$.  That gives the multiplicity of $k$ as an eigenvalue of $A$.
\end{proof}

Hou, Li, and Pan \cite{HLP} studied the eigenvalues of $K$ in general, without assuming regularity of the underlying graph.  They have two kinds of results: upper and lower bounds on the largest eigenvalue of $K$ (mostly in terms of the underlying graph), and an interlacing theorem for all eigenvalues.  Write the eigenvalues in decreasing order as $\lambda^1 \geq \lambda^2 \geq \cdots \geq \lambda^n$.  Here are some of their results:

\begin{thm}[Hou, Li, and Pan \cite{HLP}] \label{T:kebounds}  
Let\/ $\Gamma$ be a connected simple graph and let\/ $\Sigma$ be a signed simple graph.\\[5pt]  
{\rm(1)}  $\lambda^1(\Gamma,\sigma) \leq \lambda^1(-\Gamma)$, with equality iff\/ $\sigma$ is antibalanced (Lemma 3.1).\\[5pt]
{\rm(2)}  $\lambda^1(\Sigma) \leq 2(n-1)$, with equality iff\/ $\Sigma$ switches to $-K_n$ (Theorem 3.4).\\[5pt]
{\rm(3)}  $\lambda_1(\Sigma^+) + \lambda_1(\Sigma^-) \geq \lambda_1(\Sigma) \geq \lambda_1(\Sigma^+), \lambda_1(\Sigma^-)$ (Corollary 3.8).\\[5pt]
{\rm(4)}  $\lambda_1(\Sigma) \geq 1 + \max_{v\in V} d_{|\Sigma|}(v)$ (Theorem 3.10).
\end{thm}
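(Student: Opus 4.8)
The common engine for parts (1), (2), and (4) is the quadratic form attached to $K=\Eta(\Sigma)\Eta(\Sigma)\transpose$. Fixing any orientation $\eta$, for every $x\in\bbR^V$ one has
$$
x\transpose K(\Sigma)\,x=\sum_{e=vw\in E}\bigl(\eta_{ve}x_v+\eta_{we}x_w\bigr)^2,
$$
and by the sign rule $\eta_{ve}\eta_{we}=-\sigma(e)$ the $e$-term equals $(x_v-x_w)^2$ when $e$ is positive and $(x_v+x_w)^2$ when $e$ is negative. I would also record the additive splitting $K(\Sigma)=K(\Sigma^+)+K(\Sigma^-)$ into the Kirchhoff matrices of the all-positive and all-negative subgraphs (both positive semidefinite, being of the form $\Eta\Eta\transpose$; cf.\ Lemma \ref{L:krank}), which drives (3). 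Throughout I use that switching sends $\Eta(\Sigma)$ to $\Diag(\theta)\Eta(\Sigma)$ (Section \ref{isw}), hence conjugates $K$ by $\Diag(\theta)$ and leaves its spectrum unchanged.

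For (1) and (2) I work with the Rayleigh quotient $\lambda^1=\max_{x\neq0}x\transpose Kx/x\transpose x$. For (1), since $(x_v\pm x_w)^2\leq(|x_v|+|x_w|)^2$, replacing $x$ by its entrywise absolute value $|x|$ only increases the form and gives $x\transpose K(\Sigma)x\leq|x|\transpose K(-\Gamma)|x|$ while preserving the norm; maximizing yields $\lambda^1(\Gamma,\sigma)\leq\lambda^1(-\Gamma)$. If $\sigma$ is antibalanced, the Switching Lemma \ref{L:switching} switches $\Sigma$ to $-\Gamma$, so the spectra agree and equality holds. Conversely, at equality a top eigenvector $x$ makes $|x|$ a top eigenvector of the signless Laplacian $K(-\Gamma)=D+A(\Gamma)$; for connected $\Gamma$ this matrix is irreducible and nonnegative, so Perron--Frobenius forces $|x|>0$, and the edgewise equalities $(x_v-x_w)^2=(|x_v|+|x_w|)^2$ on positive edges and $(x_v+x_w)^2=(|x_v|+|x_w|)^2$ on negative edges say that positive edges join opposite signs of $x$ and negative edges join equal signs. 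Taking $X=\{v:x_v>0\}$ exhibits the bipartition of Corollary \ref{C:antibalance}, so $\sigma$ is antibalanced. For (2) I bound directly: $(\eta_{ve}x_v+\eta_{we}x_w)^2\leq2(x_v^2+x_w^2)$, whence $x\transpose K(\Sigma)x\leq2\sum_v d_{|\Sigma|}(v)\,x_v^2\leq2(n-1)\,x\transpose x$ and $\lambda^1(\Sigma)\leq2(n-1)$. Equality forces $\eta_{ve}x_v=\eta_{we}x_w$ on every edge and $d_{|\Sigma|}(v)=n-1$ wherever $x_v\neq0$; the edge condition shows the support of $x$ is a union of components, and since a component of order $n_i<n$ obeys the strictly smaller bound $2(n_i-1)$, the support is all of $V$ and $|\Sigma|=K_n$. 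Switching by the signs of $x$ then turns every edge negative, so $\Sigma$ switches to $-K_n$.

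Part (3) is immediate from the splitting $K(\Sigma)=K(\Sigma^+)+K(\Sigma^-)$: Weyl's inequality gives $\lambda_1(\Sigma)\leq\lambda_1(\Sigma^+)+\lambda_1(\Sigma^-)$, while adding the positive semidefinite summand $K(\Sigma^-)$ (resp.\ $K(\Sigma^+)$) can only raise the top eigenvalue, giving $\lambda_1(\Sigma)\geq\lambda_1(\Sigma^+)$ and $\lambda_1(\Sigma)\geq\lambda_1(\Sigma^-)$. For (4), let $v$ have degree $\Delta=\max_u d_{|\Sigma|}(u)$, with incident edges $e_1,\dots,e_\Delta$ to neighbors $w_1,\dots,w_\Delta$; since $K$ is orientation-independent I may orient so that $\eta_{ve_i}=1$. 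I test the Rayleigh quotient on $x$ with $x_v=1$, $x_{w_i}=\eta_{w_ie_i}/\Delta$, and $x_u=0$ otherwise. Each of the $\Delta$ edges at $v$ contributes $(1+1/\Delta)^2$ and all other edge terms are nonnegative, so $x\transpose K(\Sigma)x\geq\Delta(1+1/\Delta)^2$, while $x\transpose x=1+1/\Delta$; the quotient equals $\Delta+1$, giving $\lambda_1(\Sigma)\geq1+\Delta$.

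The plain inequalities and the Weyl step are routine; the real work lies in the two equality characterizations. The crux is the converse in (1): converting the scalar equalities in $(x_v\pm x_w)^2\leq(|x_v|+|x_w|)^2$ into the global statement of antibalance requires that the extremal eigenvector have no zero entries, which is exactly where connectivity of $\Gamma$ and Perron--Frobenius enter. The analogous support/propagation argument for (2) is the other delicate point, since without it one concludes only that $|\Sigma|$ meets the degree bound on the support of $x$ rather than that $|\Sigma|=K_n$.
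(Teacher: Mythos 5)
The paper itself offers no proof of this theorem: it is quoted from Hou, Li, and Pan \cite{HLP}, with pointers to their Lemma 3.1, Theorem 3.4, Corollary 3.8, and Theorem 3.10, so there is no in-paper argument to compare against. Your proof is correct and self-contained, and it is built from exactly the tools this survey develops: the edgewise quadratic form $x\transpose K(\Sigma)x = \sum_{e=vw}(\eta_{ve}x_v+\eta_{we}x_w)^2$, which by the sign rule $\eta_{ve}\eta_{we}=-\sigma(e)$ gives $(x_v-x_w)^2$ on positive and $(x_v+x_w)^2$ on negative edges; the positive semidefinite splitting $K(\Sigma)=K(\Sigma^+)+K(\Sigma^-)$ plus Weyl for (3); switching invariance of the spectrum (via Section \ref{isw} and Proposition \ref{P:swdiag}) together with the Switching Lemma \ref{L:switching} and Corollary \ref{C:antibalance} to convert the edgewise equality conditions into antibalance in (1); and the Rayleigh-quotient test vector in (4), where your normalization $\eta_{ve_i}=+1$ is legitimate because edges reorient independently and $K=D(|\Sigma|)-A(\Sigma)$ does not depend on the orientation. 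Your diagnosis of the delicate points is also accurate: Perron--Frobenius applied to the irreducible nonnegative matrix $K(-\Gamma)=D+A(\Gamma)$ (here connectivity of $\Gamma$ is essential) is what forces the extremal eigenvector to have full support in (1), and the propagation $|x_v|=|x_w|$ along edges combined with the forced degree condition $d_{|\Sigma|}(v)=n-1$ on the support is what upgrades the local equalities to $|\Sigma|=K_n$ in (2). The one omission, though a trivial one, is the backward implication of the equality statement in (2): having shown equality forces $\Sigma$ to switch to $-K_n$, you still owe the one-line computation that $K(-K_n)=(n-2)I+J$ has largest eigenvalue $(n-2)+n=2(n-1)$, so that any $\Sigma$ switching to $-K_n$ does attain the bound; with switching invariance of the spectrum, which you already recorded, this closes the ``iff.'' Since your methods are the standard Laplacian quadratic-form techniques, they in all likelihood parallel the original arguments of \cite{HLP}; in any case you have supplied precisely what the survey deliberately leaves to the references.
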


\begin{thm}[{Hou, Li, and Pan \cite[Lemma 3.7]{HLP}}]\label{T:keinterlace}
$\lambda^i(\Sigma) \geq \lambda^i(\Sigma \setminus e) \geq \lambda^{i+1}(\Sigma)$.
\end{thm}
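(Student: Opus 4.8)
The plan is to realise edge deletion as a rank-one positive-semidefinite perturbation of the Kirchhoff matrix and then apply the standard interlacing inequalities for such perturbations. Fix an orientation $\eta$ of $\Sigma$ and, as in the proof of Theorem \ref{T:rank}, let $\Eta(e)$ denote the column of $\Eta(\Sigma)$ indexed by the edge $e$; this is a real $n$-vector with exactly two nonzero entries, each $\pm1$. Since $K(\Sigma) = \Eta(\Sigma)\Eta(\Sigma)\transpose$ by \eqref{E:kirchhoff}, and deleting $e$ merely removes the column $\Eta(e)$ from $\Eta(\Sigma)$, expanding the product as a sum of outer products over the columns gives
$$
K(\Sigma) = K(\Sigma\setminus e) + \Eta(e)\Eta(e)\transpose .
$$
The matrix $C := \Eta(e)\Eta(e)\transpose$ is symmetric, positive semidefinite, and of rank one, with single nonzero eigenvalue $\|\Eta(e)\|^2 = 2$. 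The right-hand side is independent of the chosen orientation, even though $\Eta(e)$ itself is not.

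With $A := K(\Sigma\setminus e)$ and $B := K(\Sigma) = A + C$, the two asserted bounds are precisely the rank-one interlacing relations $\lambda^i(B) \ge \lambda^i(A) \ge \lambda^{i+1}(B)$. First I would obtain the left inequality from Weyl's bound $\lambda^i(A+C) \ge \lambda^i(A) + \lambda^n(C)$ together with $\lambda^n(C) = 0$, and the right inequality from Weyl's bound $\lambda^{i+1}(A+C) \le \lambda^i(A) + \lambda^2(C)$ together with $\lambda^2(C) = 0$; both of these collapse to a comparison with the neighbouring eigenvalue exactly because $C$ has rank one. Alternatively one can argue directly from Courant--Fischer: for the left inequality, $C \succeq 0$ makes the Rayleigh quotient of $B$ dominate that of $A$ on every subspace, so $\lambda^i(B) \ge \lambda^i(A)$; for the right inequality, let $S$ be the span of the top $i+1$ eigenvectors of $B$, note that $S \cap \Eta(e)^\perp$ has dimension at least $i$, and observe that $A$ and $B$ have the same Rayleigh quotient on $\Eta(e)^\perp$, so testing $\lambda^i(A)$ on an $i$-dimensional subspace of $S \cap \Eta(e)^\perp$ yields $\lambda^i(A) \ge \lambda^{i+1}(B)$.

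There is no serious obstacle here; the whole content is the identity above, which exhibits edge deletion as a rank-one downdate of $K$, after which the conclusion is a black-box application of eigenvalue interlacing for real symmetric matrices, the same tool invoked for Proposition \ref{P:einterlacing}. The only points deserving a word of care are that the perturbing matrix $C$ is positive semidefinite of rank \emph{exactly} one, so that the two Weyl bounds degenerate to the stated interlacing, and that the decomposition of $K(\Sigma)$ is orientation-independent despite the orientation-dependence of the incidence columns.
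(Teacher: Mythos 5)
Your proof is correct, and there is nothing in the paper to compare it against: the survey states this result with a citation to Hou, Li, and Pan \cite[Lemma 3.7]{HLP} and gives no proof of its own. Your argument is the natural (and, in \cite{HLP}, essentially the actual) one: since $K(\Sigma) = \Eta(\Sigma)\Eta(\Sigma)\transpose$ expands as a sum of outer products of incidence columns, deleting $e$ is the rank-one positive-semidefinite downdate $K(\Sigma) = K(\Sigma\setminus e) + \Eta(e)\Eta(e)\transpose$, after which both inequalities follow from standard rank-one interlacing (your Weyl-inequality indexing, with $\lambda^2(C)=0$ and $\lambda^n(C)=0$ because $C$ has rank exactly one, is right, as is the Courant--Fischer variant using that the Rayleigh quotients of $K(\Sigma)$ and $K(\Sigma\setminus e)$ agree on $\Eta(e)^\perp$). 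You also correctly note the two points of hygiene: both matrices live on the same vertex set since edge deletion keeps all vertices, and $\Eta(e)\Eta(e)\transpose$ is unchanged when the column is negated, so the decomposition is orientation-independent. The only cosmetic remark is that for $i=n$ the second inequality is vacuous, which the statement implicitly assumes; this does not affect the proof.
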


\subsection{Examples}\label{ix}

\begin{outlineone}

\item An \emph{oriented incidence matrix} of the unsigned graph $\Gamma$ is the same as an incidence matrix $\Eta(+\Gamma)$ of the all-positive signed graph $+\Gamma$.  Since $+\Gamma$ is balanced, the rank given by our formula equals $n - c(\Gamma)$, as is well known. 

The Kirchhoff matrix of $+\Gamma$ is simply that of $\Gamma$, i.e., $D(\Gamma) - A(\Gamma)$.  Its determinant is zero, which is consistent with Theorem \ref{T:mt} because there are no negative circles in $+\Gamma$, so the number of pseudoforests of the kind counted by Theorem \ref{T:mt} is zero.

\smallskip
\item The \emph{unoriented incidence matrix of $\Gamma$} is a $(0,1)$-matrix which has $1$ in position $(v,e)$ if $v$ is an endpoint of $e$ and has $0$ otherwise.  (This matrix is often, though I think unfortunately, called simply `the incidence matrix' of $\Gamma$.)  
It is an incidence matrix of the all-negative signed graph, $-\Gamma$.  
Since an all-negative graph is balanced if and only if it is bipartite (Corollary \ref{C:bipartite}), the rank of the matrix (except in characteristic $2$) equals $n - b$ where $b$ is the number of bipartite components of $\Gamma$.  This result was previously obtained by ad hoc methods (originally by van Nuffelen \cite{vN}), but it is really a special case of the general rank theorem for signed graphs.
\label{X:unoriented}

The Kirchhoff matrix $K(-\Gamma)$ equals $D(\Gamma) + A(\Gamma)$.  Its determinant equals the sum of $4^{c(F)}$ over all $n$-edge pseudoforests $F$ of which no component is bipartite.

\smallskip
\item The signed graphs of Figure \ref{F:sgmulti} are connected and unbalanced.  Therefore, each of their incidence matrices has rank $|V|$, which is $4$.
\end{outlineone}

\section{Line Graphs}

\subsection{Unsigned Line Graphs}\ 

The line graph of an unsigned graph $\Gamma$ is denoted by $L(\Gamma)$.  Its vertex set is $E(\Gamma)$, and two edges are adjacent if they have a common endpoint in $\Gamma$.  $L(\Gamma)$ has two kinds of distinguished circles: \emph{vertex triangles} are formed by three edges incident with a common vertex, and \emph{derived circles} are the line graphs of circles in $\Gamma$.  Every circle in $L(\Gamma)$ is known to be a set sum (i.e., symmetric difference of edge sets) of vertex triangles and derived circles.

\subsection{Signed Line Graphs}\label{lgdef}

The line graph $\Lambda(\Sigma)$ of $\Sigma$ is a switching class, not a single signed graph \cite{LSD}.  Its underlying graph is the line graph $L(|\Sigma|)$ of the underlying graph.  To define $\Lambda(\Sigma)$ we may take the approach of edge orientation or a direct definition of the circle signs.

\begin{figure}[b]
\includegraphics[scale=0.75]{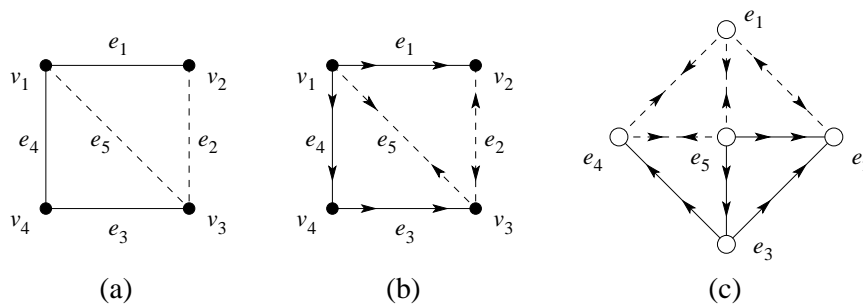}
\caption{A signed simple graph (a), an orientation (b), and the oriented line graph (c).}
\label{F:lg}
\end{figure}

\begin{outlineone}

\item \emph{Definition by Orientation \cite{LSD}.}
  
Choose an orientation $\eta$ of $\Sigma$.  We define a bidirection $\eta'$ of $L(|\Sigma|)$ and therefore an edge signature, thus forming the line signed graph $\Lambda(\Sigma)$.  Two $\Sigma$-edges $e_{vw}, e_{vu}$ incident with a vertex $v$ form an edge $e := e_{vw}e_{vu}$ in $\Lambda$, whose vertices are $e_{vw}$ and $e_{vu}$.  An end of $e$ may therefore be written $(e_{vw},e)$, corresponding to the end $(v,e_{vw})$ in $\Sigma$.  Define $\eta'(e_{vw},e) = \eta(v,e_{vw})$.

In terms of arrows, bidirect each edge of $\Sigma$ with two arrows as indicated by $\eta$, and let the arrow on $(e_{vw},e)$ point into the vertex $e_{vw}$ iff the arrow on $(v,e_{vw})$ points into the vertex $v$ in $\Sigma$.
 
Reorienting an edge in $\Sigma$ corresponds to switching the corresponding vertex in $\Lambda$.  Thus, $\Lambda(\Sigma)$ is well defined only up to switching.  I.e., it is a well defined switching class.

\smallskip
\item \emph{Definition by Circle Signs \cite{LGSC, LSD}.}   

Make every vertex triangle negative and give to every derived circle the same sign as the circle in $\Sigma$ it derives from.  Other circles get signed by the following \emph{sum rule}:  If $C$ is the set sum of certain vertex triangles and derived circles, its sign is the product of the signs of those vertex triangles and derived circles.  This rule is a consequence of Proposition \ref{P:circles}.   One has to prove that the sum rule gives the same sign no matter how it is applied, which is most easily achieved by showing that the definition by circle signs agrees with that by edge orientation, the latter being obviously well defined.
\end{outlineone}

\subsection{Reduced Line Graphs}\label{lgred}

If we allow $\Sigma$ to have parallel edges---though only with opposite sign, so that $\Sigma$ is simply signed but two vertices can be joined by both a positive and a negative edge---there is a definition of line graph similar to the preceding one.  In the line graph there are negative digons derived from those of $\Sigma$.  In the line-graph adjacency matrix $A(\Lambda)$, these digon edges cancel.  To represent that phenomenon accurately in the line graph we should \emph{reduce} the line graph by deleting pairs of parallel edges, $+ef$ and $-ef$, of opposite sign.  The \emph{reduced line graph}, $\bar\Lambda(\Sigma)$, is what results.  It is a signed simple graph.

The importance of reduction is that, even though the underlying graph of $\Sigma$ itself may not be simple, that of $\bar\Lambda(\Sigma)$ is.  
More precisely, if the underlying graph $|\Sigma|$ is simple, $|\Lambda(\Sigma)|$ is already simple and does not need to be reduced.  If $|\Sigma|$ is not simple, then $|\Lambda(\Sigma)|$ is not simple but the underlying graph of the reduced line graph is simple.  
Thus, amongst the signed simple graphs, there are graphs that are reduced line graphs of simply signed graphs but not line graphs of simply signed graphs.  One is shown in Figure \ref{F:reducedlg}; since $|\bar\Lambda(\Sigma_5)|$ is not a line graph, $\bar\Lambda(\Sigma_5)$ cannot be an unreduced line graph.  

\begin{figure}[ht]
\includegraphics[scale=0.75]{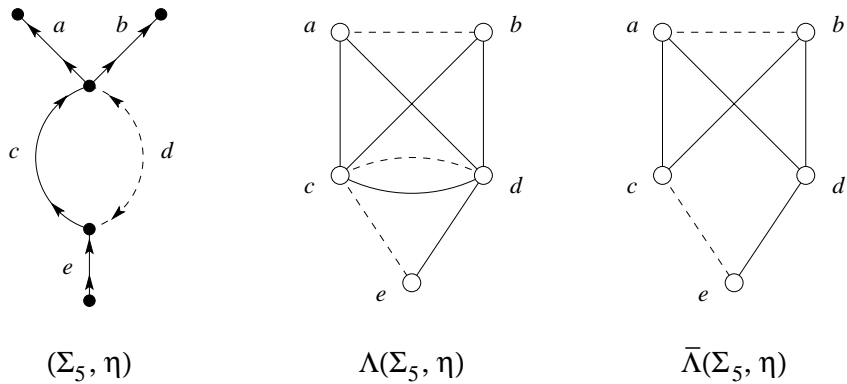}
\caption{An oriented signed graph $(\Sigma_5,\eta)$, its line graph $\Lambda(\Sigma_5)$ and its reduced line graph $\bar\Lambda(\Sigma_5)$, whose underlying graph $|\bar\Lambda(\Sigma_5)|$ is not a line graph.}
\label{F:reducedlg}
\end{figure}
%

\subsection{Geometry}\label{lggeom}

This larger class, the reduced line graphs, has been studied by Vijayakumar and his coworkers, under the name of `signed graphs represented by $D_\infty$' \cite{Vd, CV}.  
From our line-graphic point of view, the name comes from the fact that 
$$
D_n := \{ \pm \bb_i \pm \bb_j : 0 < i < j \leq n \},
$$
where $\{\bb_1, \ldots, \bb_n\}$ is the standard orthonormal basis of $\bbR^n$, is the set of all those vectors that can be the column of an edge in the incidence matrix of a signed graph with $n$ vertices; the line graph corresponds to taking dot products of these vectors.  Allowing $n$ to be arbitrarily large, one has $D_\infty$.  

$D_n$ is best known as one of the classical root systems in Lie algebra.  The other important root system for us is $E_8$, an exceptional root system in $\bbR^8$ whose exact definition is unnecessary here---but see \cite{Ve8, Ve8a}.

We need to define representation by $W \subseteq \bbR^n$ since it depends on the adjacency matrix and is at the heart of the treatment of line graphs.  A signed simple graph $\Sigma$ is \emph{represented by $W$} if there is an injection $f: V \to W$ such that the dot product $f(v) \cdot f(w) = \sigma(e_{vw})$ if $v$ and $w$ are adjacent and $0$ if they are not, and all $\|f(v)\| = \sqrt 2$.  (That ought to have been $-\sigma(e_{vw})$ to be consistent with our definition of line graph but the negative sign was omitted by Vijayakumar, who defined this terminology.)

\subsection{Adjacency Matrix and Eigenvalues}\label{lgae}

The adjacency matrix of $\Lambda(\Sigma)$ is the $E \times E$ matrix given by 
\begin{equation}\label{E:lga}
A(\Lambda(\Sigma)) = 2I - \Eta(\Sigma)\transpose \Eta(\Sigma).
\end{equation}

\begin{thm}\label{T:lgae}
The largest eigenvalue of\/ $A(\Lambda(\Sigma))$ is at most $2$.  Moreover, $2$ is an eigenvalue of\/ $A(\Lambda(\Sigma))$ with multiplicity $|E| - n + b(\Sigma)$; in particular, it is an eigenvalue if and only if\/ $\Sigma$ has a connected component that is neither a tree nor an unbalanced 1-tree.
\end{thm}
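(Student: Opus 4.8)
The plan is to exploit the factorisation \eqref{E:lga}, $A(\Lambda(\Sigma)) = 2I - \Eta(\Sigma)\transpose \Eta(\Sigma)$, which reduces everything to a statement about the Gram matrix $M := \Eta(\Sigma)\transpose \Eta(\Sigma)$. First I would observe that $M$, being of the form $N\transpose N$, is positive semidefinite, so all its eigenvalues are $\geq 0$; consequently every eigenvalue of $A(\Lambda(\Sigma)) = 2I - M$ is $\leq 2$. This settles the bound on the largest eigenvalue immediately.

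Next, the eigenvalue $2$ of $A(\Lambda(\Sigma))$ corresponds exactly to the eigenvalue $0$ of $M$, with the same multiplicity, so the multiplicity of $2$ equals the nullity of $M$. Here I would invoke the standard fact (already used in the proof of Lemma \ref{L:krank}) that a real matrix of the form $N\transpose N$ has the same rank as $N$. Taking $N = \Eta(\Sigma)$ and using Theorem \ref{T:rank} in the characteristic-$0$ case, $\operatorname{rank} M = \operatorname{rank}\Eta(\Sigma) = n - b(\Sigma)$. Since $M$ is $|E|\times|E|$, its nullity is $|E| - (n - b(\Sigma)) = |E| - n + b(\Sigma)$, which is the claimed multiplicity.

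For the final ``in particular'' clause I would argue component by component, using that $\Eta(\Sigma)$, and hence $M$, is block diagonal across the connected components of $\Sigma$. A component with $n_i$ vertices and $m_i$ edges contributes $m_i - n_i + b_i$ to the nullity, where $b_i = 1$ if the component is balanced and $b_i = 0$ if it is unbalanced; this is just the single-component instance of the nullity count above. A tree has $m_i = n_i - 1$ and, having no circles, is balanced, giving contribution $0$; an unbalanced $1$-tree has $m_i = n_i$ and $b_i = 0$, also giving $0$. Every other connected signed graph gives a strictly positive contribution: a balanced $1$-tree ($m_i = n_i$, $b_i = 1$) gives $1$, and any component with $m_i \geq n_i + 1$ gives at least $1$ regardless of balance. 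Hence the total nullity is positive, equivalently $2$ is an eigenvalue, precisely when some component is neither a tree nor an unbalanced $1$-tree.

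The argument is almost entirely formal once \eqref{E:lga} and Theorem \ref{T:rank} are in hand, so I do not expect a serious obstacle. The only point requiring care is the component-wise bookkeeping in the last paragraph: one must correctly match the two degenerate types (tree, unbalanced $1$-tree) to nullity $0$ and verify that all remaining component types force a strictly positive contribution.
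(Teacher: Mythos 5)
Your proposal is correct and takes essentially the same route as the paper: positive semidefiniteness of $\Eta(\Sigma)\transpose\Eta(\Sigma)$ gives the upper bound $2$, and the identity $\operatorname{rank}(N\transpose N)=\operatorname{rank}(N)$ together with Theorem \ref{T:rank} gives the multiplicity $|E|-n+b(\Sigma)$. Your component-by-component bookkeeping for the ``in particular'' clause is a correct elaboration of a step the paper's proof leaves implicit.
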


\begin{proof}
A matrix product of the form $M\transpose M$ is positive semidefinite, so all its eigenvalues are at least $0$.  An eigenvalue $\beta$ of $M\transpose M$ corresponds to the eigenvalue $\alpha-\beta$ of $\alpha I - M\transpose M$.  Letting $M = \Eta$ and $\alpha=2$, we deduce that the eigenvalues of the right side of Equation \eqref{E:lga} are not greater than $2$.

By matrix theory, the rank of $\Eta\transpose \Eta$ equals that of $\Eta$, which is $n - b(\Sigma)$.  It has $0$ as an eigenvalue of multiplicity (order $-$ rank).  The order of $\Eta\transpose \Eta$ is $|E|$.  Thus, it has $0$ as an eigenvalue of multiplicity $|E| - n + b(\sigma)$.  The corresponding eigenvalue $2$ of $A(\Lambda(\Sigma))$ has the same multiplicity.
\end{proof}

\begin{thm}[{\cite{CGSS}, \cite[Theorem 1.1]{CV}}]\label{T:e2}
If $\Sigma$ is a signed simple graph whose eigenvalues are $\leq 2$, then $-\Sigma$ is represented by $D_n$ or $E_8$.
\end{thm}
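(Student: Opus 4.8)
The plan is to turn the spectral hypothesis into a Gram-matrix statement and then appeal to the classification of root lattices. First I would note that ``all eigenvalues $\leq 2$'' is exactly the positive semidefiniteness of $M := 2I - A(\Sigma)$, whose eigenvalues are the $2-\lambda_i$. Since $A(\Sigma)$ has zero diagonal and entries in $\{0,\pm1\}$, the matrix $M$ has $2$'s on the diagonal and off-diagonal entries $M_{ij} = -\sigma(e_{ij})$, with $M_{ij}=0$ when $v_i,v_j$ are nonadjacent. A positive semidefinite matrix is a Gram matrix, so I would factor $M = G\transpose G$ and let $g_i \in \bbR^m$ be the $i$th column of $G$, where $m = \operatorname{rank} M$. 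Then $\|g_i\|^2 = 2$ and $g_i \cdot g_j = M_{ij} = -\sigma(e_{ij})$, which is precisely the sign of the edge $v_iv_j$ in $-\Sigma$; so setting $f(v_i):=g_i$ gives a candidate representation of $-\Sigma$ by vectors of length $\sqrt2$ with inner products in $\{0,\pm1\}$. Injectivity of $f$ is automatic, since $g_i = g_j$ would force $g_i\cdot g_j = 2$, which is excluded.

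The substance is then to show that these vectors can be placed inside $D_n$ or $E_8$. I would first reduce to connected $\Sigma$: the components of $\Sigma$ are the blocks of $M$, hence correspond to mutually orthogonal families of the $g_i$, and may be represented separately. For connected $\Sigma$, let $L := \bbZ\{g_1,\ldots,g_n\}$ be the integral lattice they generate. As the $g_i$ have squared length $2$ and pairwise integer inner products, $L$ is an even integral lattice generated by its norm-$2$ (squared length $2$) vectors, that is, a root lattice. I would check that connectedness forces $L$ to be indecomposable: in any orthogonal splitting $L = L_1 \perp L_2$, a norm-$2$ vector $v = v_1+v_2$ has $\|v_1\|^2 + \|v_2\|^2 = 2$ with both summands even, hence lies wholly in $L_1$ or in $L_2$; so the $g_i$ would split into two orthogonal families, making $\Sigma$ disconnected.

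At this point I would invoke the classical classification of indecomposable root lattices: each is of type $A_n$, $D_n$, $E_6$, $E_7$, or $E_8$. The set $\{g_i\}$ lies inside the root system $\Phi(L)$ of all norm-$2$ vectors of $L$, and $\Phi(L)$ is the corresponding irreducible root system. The argument then finishes with the standard isometric embeddings $A_n \hookrightarrow D_{n+1}$ (sending the roots to $\bb_i - \bb_j$) and $E_6 \hookrightarrow E_7 \hookrightarrow E_8$: these realize every $A$- or $D$-type configuration inside some $D_m \subseteq D_\infty$, and every exceptional configuration inside $E_8$. Hence $\{g_i\}$, and therefore $-\Sigma$, is represented by $D_n$ for some $n$, or by $E_8$.

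The main obstacle is the classification of indecomposable root lattices itself; this is the one genuinely deep ingredient, and in a self-contained treatment it is obtained by the Cameron--Goethals--Seidel--Shult analysis of finite configurations of length-$\sqrt2$ vectors with inner products in $\{0,\pm1\}$---exactly our $\{g_i\}$---culminating in the $A$--$D$--$E$ list. Everything else (the Gram factorisation, the automatic injectivity, the indecomposability argument, and the embedding bookkeeping) is routine linear algebra and root-system combinatorics. I would also remark that the forward direction, that line graphs supply signed graphs with eigenvalues $\leq 2$, is already present in Equation \eqref{E:lga} and Theorem \ref{T:lgae}, where the columns of $\Eta(\Sigma)$ are themselves the $D_n$-vectors; the present theorem is the converse, that the spectral bound alone forces such a geometric representation up to the single exceptional system $E_8$.
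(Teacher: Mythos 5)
Your proposal is correct and takes essentially the same route the paper sketches: its stated ``essential observation'' is exactly your first step---that eigenvalues $\leq 2$ make $2I-A(\Sigma)$ positive semidefinite, hence the Gram matrix of norm-$\sqrt2$ vectors with inner products $-\sigma(e_{ij})\in\{0,\pm1\}$, i.e.\ the signs of $-\Sigma$---after which both you and the paper delegate the one deep step, classifying such configurations into the $A$--$D$--$E$ root systems, to Cameron--Goethals--Seidel--Shult \cite{CGSS}, your root-lattice packaging (evenness, indecomposability from connectedness, and the embeddings $A_n\hookrightarrow D_{n+1}$, $E_6\hookrightarrow E_7\hookrightarrow E_8$) being the standard organisation of that classification. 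One small caution: your reduction to connected $\Sigma$ is the correct reading of the theorem (as the paper's following paragraph indicates), but components cannot always be recombined, since $E_8\perp E_8$ or $E_8\perp D_m$ embeds in neither a single $D_n$ nor $E_8$; so the conclusion should be asserted per connected component rather than via ``represented separately.''
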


That is, the connected signed simple graphs whose eigenvalues are at most $2$ are the reduced line graphs of simply signed graphs (represented by $D_n$) and only a few additional sporadic examples (represented by $E_8$) which are neither line graphs nor reduced line graphs.  This fact was explicitly recognised by Chawathe and Vijayakumar in \cite[Theorem 1.1]{CV} (or see \cite[the first Theorem 2.4, on p.~214]{VS}), although the proof is actually in the classic paper of Cameron, Goethals, Seidel, and Shult \cite{CGSS}.  

The essential observation behind Theorem \ref{T:e2} is that, if $\Sigma$ has all eigenvalues $\leq 2$, then $2I - A(\Sigma)$ is a positive semidefinite matrix and therefore is the matrix of inner products of a set $W$ of vectors in $\bbR^m$ for some $m$.  The proof involves classifying the possible sets $W$, which turn out to be the subsets of $D_n$ and $E_8$.

Singhi and Vijayakumar showed that having an eigenvalue $>2$ implies an induced subgraph with all eigenvalues $\leq2$; indeed, their result is stronger:

\begin{thm}[Singhi and Vijayakumar \cite{SV}]\label{T:egt2}
If a signed simple graph $\Sigma$ has an eigenvalue greater than or equal to $2$, then it contains an induced subgraph whose largest eigenvalue is exactly $2$.
\end{thm}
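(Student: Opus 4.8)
The plan is to argue by induction on the order $n=|V|$, reducing the whole theorem to the impossibility of a single extremal configuration. Set $\lambda := \lambda^1(\Sigma)$ and suppose $\lambda \ge 2$. If $\lambda = 2$ we may take the subgraph to be $\Sigma$ itself, so assume $\lambda > 2$. For a vertex $v$ let $\Sigma - v$ be the induced subgraph on $V\setminus\{v\}$. If some $\Sigma - v$ still has $\lambda^1(\Sigma - v) \ge 2$, the inductive hypothesis gives an induced subgraph of $\Sigma - v$, hence of $\Sigma$, with largest eigenvalue exactly $2$, and we are done. Thus the only case needing a direct argument is the extremal one:
\begin{equation*}
\lambda^1(\Sigma) > 2 \qquad\text{while}\qquad \lambda^1(\Sigma - v) < 2 \ \text{ for every } v \in V .
\end{equation*}
The entire theorem reduces to proving that \emph{no} signed simple graph satisfies this. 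Such a $\Sigma$ is necessarily connected: if it had two or more components, deleting a vertex outside a component realizing the maximum would leave $\lambda^1$ unchanged, still $>2$, contradicting the second condition.

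Next I would extract the matrix content. By the full interlacing theorem underlying Proposition~\ref{P:einterlacing} one has $\lambda^1(\Sigma) \ge \lambda^1(\Sigma - v) \ge \lambda^2(\Sigma)$, so the hypothesis forces $\lambda^2(\Sigma) < 2$; thus $\lambda>2$ is a \emph{simple}, strictly separated top eigenvalue, with unit eigenvector $u$, $A(\Sigma)u=\lambda u$. Put $N := 2I - A(\Sigma)$, a symmetric matrix with diagonal $2$ and off-diagonal entries $-\sigma(e_{vw})\in\{0,\pm1\}$. The condition $\lambda^1(\Sigma-v)<2$ says exactly that every order-$(n-1)$ principal submatrix $N_v$ is positive definite, whereas $u\transpose N u = (2-\lambda)\|u\|^2 < 0$, so $N$ has precisely one negative eigenvalue. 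Because any nonzero vector with a vanishing coordinate lies in the domain of some positive-definite $N_v$, every such vector $x$ satisfies $x\transpose N x > 0$; since $u\transpose N u<0$, the vector $u$ has no zero coordinate. Switching $\Sigma$ by $\theta(v):=\operatorname{sign}(u_v)$ preserves the spectrum (Proposition~\ref{P:swspec}) and replaces $u$ by $\Diag(\theta)u>0$, so after switching we may assume the top eigenvector $u$ is strictly positive.

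To finish I would feed the vertex-deleted pieces into Theorem~\ref{T:e2}. For each $v$, the matrix $N_v = 2I - A(\Sigma-v)$ is positive definite, hence (as in the observation following Theorem~\ref{T:e2}) is the Gram matrix of linearly independent vectors of squared length $2$; by Theorem~\ref{T:e2}, $-(\Sigma-v)$ is represented inside a copy of $D_m$ or $E_8$. The goal is to show that one such representation \emph{extends} by a single root $x_v$ with $\langle x_v, x_w\rangle = -\sigma(e_{vw})$ for all $w\neq v$ and $\|x_v\|^2 = 2$. Any such extension realizes $N = 2I - A(\Sigma)$ as a genuine Gram matrix, forcing $N$ to be positive semidefinite and hence $\lambda^1(\Sigma)\le 2$, contradicting $\lambda>2$ and closing the induction.

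The extension step is the main obstacle, and it is exactly where the geometry of the root systems $D_n$ and $E_8$ (the classification of Cameron, Goethals, Seidel, and Shult that powers Theorem~\ref{T:e2}) must be invoked: the prescribed integer inner products $-\sigma(e_{vw})\in\{0,\pm1\}$ against the roots $\{x_w\}$ pin down a unique candidate lattice vector, and one shows it is again a root of the ambient system unless $\{x_w\}$ already carries a singular Gram subconfiguration, i.e.\ unless some induced subgraph has eigenvalue exactly $2$; the positive eigenvector $u>0$ is what organizes the signs in this bookkeeping. I do not expect an elementary shortcut here: for $n\le 3$ the configuration is ruled out by hand, since $\operatorname{tr}A=0$ and $\operatorname{tr}A^2 = 2|E|$ are incompatible with $\lambda^1>2>\lambda^2$, but a naive Rayleigh estimate for general $n$ yields only $n(\lambda-2)<2(\lambda-1)$, which does not by itself exclude the extremal graph. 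Hence the root-system machinery, rather than a spectral inequality, is what genuinely completes the proof.
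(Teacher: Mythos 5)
The paper itself gives no proof of this theorem --- it is quoted from \cite{SV}, with only the remark that it is ``the harder converse'' of Proposition~\ref{P:egt2} --- so your attempt has to stand on its own. Your reduction is correct and is indeed the natural entry point: the induction (equivalently, a vertex-minimal counterexample) reduces everything to ruling out a $\Sigma$ with $\lambda^1(\Sigma)>2$ but $\lambda^1(\Sigma-v)<2$ for all $v$; connectivity, the interlacing consequence $\lambda^2(\Sigma)<2$, the fact that $N=2I-A(\Sigma)$ has exactly one negative eigenvalue with all proper principal submatrices $N_v$ positive definite, the full support of the top eigenvector $u$, and the switch to $u>0$ are all right, and even your side inequality $n(\lambda-2)<2(\lambda-1)$ checks out (zero out one coordinate of $u$ at a time and sum).

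The genuine gap is the extension step, and it is not merely unproven --- as formulated it is a provably unreachable target, so the approach fails at exactly the point where the work begins. Since $N_v$ is positive definite, the vectors $\{x_w : w\neq v\}$ representing $-(\Sigma-v)$ via Theorem~\ref{T:e2} are linearly independent, so there is a \emph{unique} vector $y_v$ in their span with $\langle y_v, x_w\rangle = -\sigma(e_{vw})$ for all $w$, and by the Schur complement $\det N = (2-c)\det N_v$ with $c=\|y_v\|^2$. Your own hypotheses give $\det N<0$ and $\det N_v>0$, hence $c>2$: the candidate vector is \emph{never} a root, and no vector in any Euclidean space can complete the Gram realization of $N$, because $N$ is simply not positive semidefinite. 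Thus ``show the extension exists'' can only be carried out after a contradiction is already in hand; it cannot generate one, and the claimed dichotomy (``it is a root unless some induced subgraph has eigenvalue exactly $2$'') is refuted by the computation above. What actually closes the argument in \cite{SV} is different in kind: one exploits the concrete structure of the representations --- in the $E_8$ case the $n-1$ independent roots force $n\le 9$, and in the $D_m$ case the roots $\pm\bb_i\pm\bb_j$ make $\Sigma-v$ a reduced line graph of a simply signed graph (Sections~\ref{lgred}--\ref{lggeom}), whose coordinates can be analyzed combinatorially --- to exhibit \emph{inside $\Sigma$ itself} an induced subgraph on which $2I-A$ is singular positive semidefinite, i.e., with largest eigenvalue exactly $2$. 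Your spectral preliminaries are compatible with that program, but they do not substitute for this root-system analysis, which is the actual content of the theorem.
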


This result is the harder converse of the following corollary of Proposition \ref{P:einterlacing}.

\begin{prop}\label{P:egt2}
If $\Sigma$ has an induced subgraph whose largest eigenvalue is $2$, then the largest eigenvalue of $\Sigma$ is at least $2$.
\end{prop}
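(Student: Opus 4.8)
The plan is to deduce this directly from the interlacing inequality of Proposition \ref{P:einterlacing}; indeed the statement is essentially a restatement of that proposition. Let $\Sigma'$ denote the induced subgraph of $\Sigma$ whose largest eigenvalue is $2$, so that $\lambda^1(\Sigma') = 2$. Since $\Sigma'$ is an induced subgraph of $\Sigma$, I would apply Proposition \ref{P:einterlacing} with $\Sigma_1 = \Sigma'$ and $\Sigma_2 = \Sigma$ to obtain $\lambda^1(\Sigma') \leq \lambda^1(\Sigma)$. Chaining this with the hypothesis gives $\lambda^1(\Sigma) \geq \lambda^1(\Sigma') = 2$, which is exactly the conclusion.

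The one point I would pause to confirm is that the hypothesis of Proposition \ref{P:einterlacing} truly applies here, i.e.\ that passing to an induced subgraph corresponds at the matrix level to passing to a principal submatrix. This holds because an induced subgraph keeps every edge, together with its sign, joining the vertices it retains; hence $A(\Sigma')$ is obtained from $A(\Sigma)$ by deleting the rows and columns indexed by the discarded vertices, and is therefore a principal submatrix. The standard interlacing theorem for real symmetric matrices---already invoked in the proof of Proposition \ref{P:einterlacing}---then supplies the needed monotonicity of the top eigenvalue.

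I expect no genuine obstacle: this is the ``easy direction'' contrasted with the Singhi--Vijayakumar Theorem \ref{T:egt2}, and the entire content is that the largest eigenvalue cannot decrease when one enlarges the graph. The only care required is purely bookkeeping about which notion of subgraph is in force.
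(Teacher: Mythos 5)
Your proof is correct and is exactly the paper's route: Proposition \ref{P:egt2} is stated there precisely as a corollary of the interlacing result in Proposition \ref{P:einterlacing}, obtained by taking $\Sigma_1 = \Sigma'$ and $\Sigma_2 = \Sigma$ and chaining $\lambda^1(\Sigma) \geq \lambda^1(\Sigma') = 2$. Your check that an induced subgraph yields a principal submatrix of $A(\Sigma)$ is the right (and only) point of care, and it holds as you argue.
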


\subsection{Examples}\label{lgx}

\begin{outlineone}
\item The line graph of the all-negative signed graph $-\Gamma$ is $[-L(\Gamma)]$, the switching class of the ordinary line graph with all negative signs.  From the standpoint of line graphs, therefore, ordinary graphs should be considered as all-negative signed graphs, instead of all-positive as in other parts of signed graph theory.

\smallskip
\item Hoffman's generalised line graph \cite{GLG} is the reduced line graph of a signed graph; specifically, of $-\Gamma(m_1,\ldots,m_n)$, which consists of $-\Gamma$ together with $m_i$ negative digons attached at the vertex $v_i$.  See Figure \ref{F:glg}.
\label{X:glg}
\begin{figure}[ht]
\includegraphics[scale=0.8]{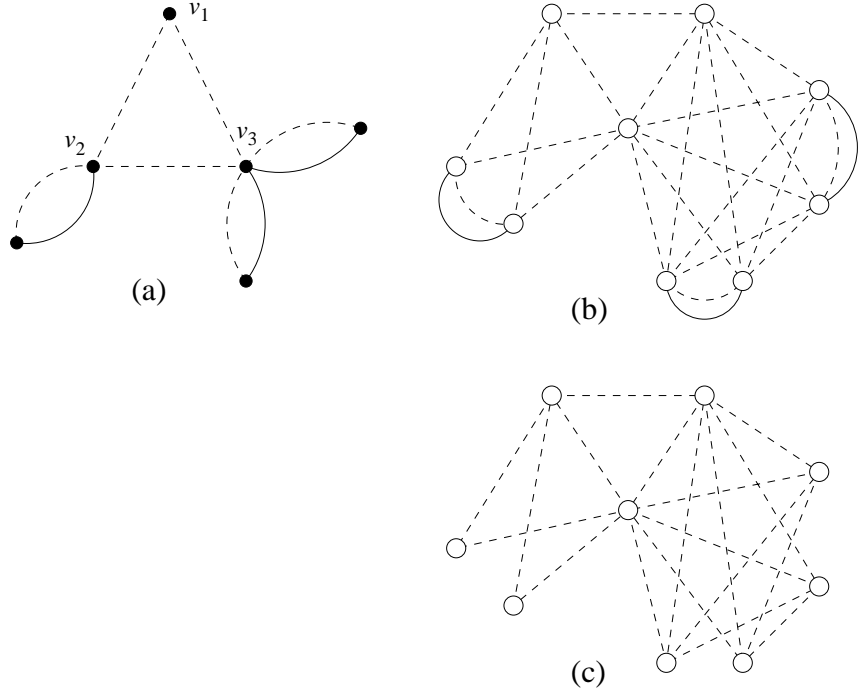}
\caption{A signed graph $-K_3(0,1,2)$ and its reduced line graph, which is a generalised line graph with all negative signs.}
\label{F:glg}
\end{figure}

\smallskip
\item The line graphs $\Lambda(\Sigma)$ that are antibalanced are those of the form $[-\Gamma']$ where $\Gamma_0$ is an ordinary line graph or a generalised line graph; that is:  

\begin{thm}[Cameron, Goethals, Seidel, and Shult \cite{CGSS}]\label{T:eglg}
The all-negative signed graphs that are reduced line graphs of signed graphs are precisely the all-negative generalised line graphs $-\Gamma(m_1,\ldots,m_n)$.
\end{thm}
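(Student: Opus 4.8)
The statement is a biconditional, and I would prove the two implications separately, with the reverse being essentially a definition and the forward one carrying all the weight. For the reverse implication, let $-\Gamma'$ be an all-negative generalised line graph, say $\Gamma' = L(\Gamma;m_1,\dots,m_n)$. By the definition of Hoffman's generalised line graph recalled in Example~\ref{X:glg}, $-\Gamma'$ is exactly the reduced line graph $\bar\Lambda\bigl(-\Gamma(m_1,\dots,m_n)\bigr)$ of the simply signed graph formed from $-\Gamma$ by attaching $m_i$ negative digons at each $v_i$; hence $-\Gamma'$ is a reduced line graph, as claimed. En route one uses that this reduced line graph really is antibalanced, i.e.\ that its switching class contains the all-negative representative (Example~\ref{X:glg}, Figure~\ref{F:glg}): the only generating circles of $L(|\Sigma|)$ that survive reduction are vertex triangles, which are negative and of odd length~$3$, and derived circles of circles of length $\ge 3$ in $-\Gamma(m_i)$, which carry the antibalance sign inherited from $-\Gamma$, and the sum rule of Section~\ref{lgdef} then propagates the antibalance sign to every surviving circle.

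For the forward implication, let $\Delta$ be all negative and belong to a reduced-line-graph switching class $\bar\Lambda(\Sigma)$, and set $\Gamma' := |\Delta|$. The plan is to exhibit a representation of the all-positive graph $+\Gamma'$ by the root system $D_n$ and then appeal to the root-system classification. Reorientations of edges of $\Sigma$ correspond to switchings of vertices of the line graph, so I may choose the orientation $\eta$ of $\Sigma$ for which $\bar\Lambda(\Sigma,\eta)=\Delta$ is the all-negative representative. Sending each vertex of $\Delta$ (an edge of $\Sigma$) to its column in $\Eta(\Sigma,\eta)$ gives a vector $\pm\bb_i\pm\bb_j\in D_n$ of squared length~$2$, and by $A(\Lambda(\Sigma)) = 2I - \Eta\transpose\Eta$ the dot product of two distinct columns equals $-1$ times the sign of the line-graph edge joining them when they are adjacent --- hence $+1$, since $\Delta$ is all negative --- and equals $0$ when they are not. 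The assignment is injective because $\Sigma$, being simply signed, has no parallel edges of equal sign, so distinct edges yield distinct columns (a negative digon contributes the two distinct vectors $\bb_i+\bb_w$ and $\bb_i-\bb_w$). Thus $+\Gamma'$ is represented by $D_n$ in the sense of Section~\ref{lggeom}; note that this concrete construction lands in $D_n$ automatically, so the $E_8$ alternative of Theorem~\ref{T:e2} never intervenes.

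It remains to prove that the all-positive graph $+\Gamma'$, represented by $D_n$ with all edge dot products equal to $+1$, is the all-positive signature of a generalised line graph; this is the crux, and it is precisely the Cameron--Goethals--Seidel--Shult classification \cite{CGSS}. I would sketch it thus. Working up to negation of individual representing vectors --- which negates all edges at the corresponding vertex, hence is switching, and so is harmless --- one normalises the configuration to the standard generalised-line-graph form: vectors $\bb_i+\bb_j$ indexed by the edges of a root graph $\Gamma$ on the coordinate set, together with matched pairs $\bb_i+\bb_w,\ \bb_i-\bb_w$ on coordinates $w$ used once each. The sum-vectors sharing a coordinate reproduce the vertex-triangle and derived-circle adjacencies of $L(\Gamma)$, while each matched pair at $i$ has dot product~$0$ (so its two vectors are non-adjacent) yet dot product~$+1$ with every edge at $i$, which is exactly one Hoffman petal at $i$; counting the petals recovers $m_1,\dots,m_n$, whence $\Gamma' = L(\Gamma;m_1,\dots,m_n)$ and $\Delta=-\Gamma'$ is an all-negative generalised line graph. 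The main obstacle is this normalisation-and-counting step: one must show that no configuration of squared-length-$2$ vectors of $D_n$ with mutual dot products in $\{0,+1\}$ falls outside the line-graph-plus-petals pattern, and it is here that the genuine combinatorial work of \cite{CGSS} resides; keeping Vijayakumar's sign convention straight against the line-graph convention $-\sigma$ is a secondary but necessary bookkeeping.
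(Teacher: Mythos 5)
The paper contains no proof of this theorem to compare against: it is stated as a quoted classification, with the substance explicitly attributed to \cite{CGSS} (where it appears in adjacency-matrix rather than signed-graph language), so your proposal can only be judged on its own terms. On those terms its architecture is right and faithful to the survey's machinery. The forward translation is correct in every detail that matters: reorientation of $\Sigma$ realises any representative of the switching class; the columns of $\Eta(\Sigma,\eta)$ are roots $\pm\bb_i\pm\bb_j$; the identity $A = 2I - \Eta\transpose\Eta$ together with the cancellation of opposite-signed parallel edges means the Gram matrix computes the \emph{reduced} line graph (exactly why reduction is the right notion); and you correctly flip Vijayakumar's convention from Section \ref{lggeom} so that it is $-\Delta = +\Gamma'$ that is represented, with the $E_8$ branch of Theorem \ref{T:e2} never arising because the construction lands in $D_n$ outright. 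But be clear-eyed that your final paragraph is a statement of the CGSS theorem, not a proof of it: that every set of norm-$2$ vectors of $D_n$ with pairwise inner products in $\{0,+1\}$ is, after negating some vectors, an edges-plus-petals configuration is the entire mathematical content being credited to \cite{CGSS}. Invoking it is legitimate here --- the theorem carries their names, and the survey does the same --- but your write-up should say plainly that the classification is quoted, not that it has been sketched and therefore proved.

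The one genuine soft spot is the antibalance verification in your reverse direction. Derived digons are even circles of negative sign, i.e.\ precisely the generators violating the antibalance pattern $\sigma(C) = (-1)^{|C|}$, and a circle of the reduced line graph may a priori admit set-sum representations that use them (the two parallel edges of a derived digon can be cancelled by vertex triangles at the two endpoints of the digon in $\Sigma$), in which case your parity-times-sign computation acquires a stray factor of $-1$ for each digon used; your argument needs a digon-free representation to exist, which you do not establish. It is cleaner to bypass the sum rule entirely: orient $\Sigma = -\Gamma(m_1,\ldots,m_n)$ so that every negative edge is extraverted ($\eta = +1$ at both ends) and every positive digon edge has $\eta = +1$ at its root vertex $v_i$, forcing $-1$ at the pendant vertex. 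Then any two $\Sigma$-edges meeting at a root vertex yield a line-graph edge of sign $-(+1)(+1) = -1$, while the only pair meeting at a pendant vertex yields exactly the opposite-signed parallel pair that reduction deletes. This orientation makes $\bar\Lambda(-\Gamma(m_1,\ldots,m_n))$ literally all negative, proving the reverse inclusion (and the antibalance claim of Example \ref{lgx}.\ref{X:glg}, cf.\ Figure \ref{F:glg}) in two lines, with no bookkeeping over which generators survive reduction.
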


\noindent 
(See the statement in the introduction to \cite{Vd} [where `The family of sigraphs represented by $D_\infty$' should be `The family of \emph{graphs} \dots'].)  
Again, we see that the usual line graphs are all-negative signed graphs.
(This result was not originally stated explicitly in terms of signed graphs, but in an equivalent fashion in terms of adjacency matrices.)  

\smallskip
\item As a directed graph is a bidirected all-positive graph $(+\Gamma,\eta)$, it has a signed line graph.  The positive part $\Lambda^+(+\Gamma,\eta)$ is a directed graph as well; it is precisely the Harary--Norman line digraph of \cite{LDG}.
\end{outlineone}

\subsection{History}\label{lghist}

The two definitions of a line graph of a signed graph are from \cite{LSD}, which has been on the verge of being written for more than two decades.

The interpretation of graphs represented by $D_\infty$, including Hoffman's generalised line graphs (Example \ref{lgx}.\ref{X:glg}), as line graphs was first stated in \cite[Example 2]{LGSC}.  It is implicit in the geometrical representation in \cite{CGSS}, in Vijayakumar's geometrical representation by $D_n$, and also in Godsil and Royle's presentation \cite[Section 12.1]{AGT}, based on the seminal paper \cite{CGSS}, which talks of cancelling double edges but without recognising the cancellation as due to opposite signs.


\end{document}